\documentclass[12pt]{amsart}
\usepackage{amsmath,amsthm,amssymb}
\usepackage{mathptmx,mathrsfs,enumerate,url}
\usepackage{comment}
\usepackage{datetime}

\usepackage[backend=biber,style=numeric,sorting=nyvt]{biblatex}
\usepackage[all]{xy}

\newcommand{\dd}[1]{\[\xymatrix{#1}\]}

\usepackage{tikz-cd}

\usepackage{extpfeil}
\usepackage{mathtools}

\newtheorem{theorem}{Theorem}[section]
\newtheorem{prop}[theorem]{Proposition}
\newtheorem{corollary}[theorem]{Corollary}
\newtheorem{lem}[theorem]{Lemma}
\newtheorem{introtheorem}{Theorem}

\theoremstyle{definition}
\newtheorem{definition}[theorem]{Definition}
\newtheorem{remark}[theorem]{Remark}

\newtheorem{problem}{Problem}

\newenvironment{acknowledgements}%
	{
   \begin{center}%
	\scshape Acknowledgments
   \end{center}\vspace{6pt}
   }%

\def\bZ{{\mathbb Z}}
\def\bQ{{\mathbb Q}}

\def\Qp{{{\mathbb Q}_p}}

\def\id{{\mbox{\rm id}}}
\def\Ker{{\mbox{\rm Ker}}}
\def\Im{{\mbox{\rm Im}}}
\def\mod{{\mbox{ mod }}}

\def\Aut{{\mbox{\rm Aut}}}

\def\Out{{\mbox{\rm Out}}}
\def\Gal{{\mbox{\rm Gal}}}

\def\defeq{ \ {\stackrel{\mathrm{def}}{=}} \ }

\title[On finite subgroups of the outer automorphism groups]{Finite subgroups of outer automorphism groups of the absolute Galois groups of mixed-characteristic local fields}

\author[Yu Nishio]{Yu Nishio \\ \\ September 2026}
\address[Yu Nishio]{}
\email{nishio.math@gmail.com}

\subjclass[2020]{11S20}

\keywords{mixed-characteristic local field, absolute Galois group, anabelian geometry, mono-anabelian geometry, group of MLF-type}

\date{}

\begin{document}

\begin{abstract}
    In the present paper, we study finite subgroups of the outer automorphism groups of the absolute Galois groups of mixed-characteristic local fields from the point of view of mono-anabelian geometry. 
    We shall refer to a finite subgroup of 
    the outer automorphism group of the absolute Galois group of a mixed-characteristic local field
    as quasi-geometric if its inverse image in the automorphism group of the corresponding absolute Galois group is isomorphic to the absolute Galois group of some mixed-characteristic local field.
    It is well-known that the image of the natural injective homomorphism from the automorphism group of a mixed-characteristic local field to the outer automorphism group of the associated absolute Galois group yields a typical example of a quasi-geometric subgroup. The main result of the present paper asserts the existence of non-quasi-geometric finite subgroups of the outer automorphism groups of the absolute Galois groups of mixed-characteristic local fields.
    This result shows that the set of quasi-geometric subgroups of the outer automorphism group of the absolute Galois group of a mixed-characteristic local field cannot be characterized, in general, as the set of all finite subgroups of the outer automorphism group. 
\end{abstract} 

\maketitle

\tableofcontents

\setcounter{section}{-1}

\section*{Introduction}
In the present paper, we study finite subgroups of the outer automorphism groups of the absolute Galois groups of mixed-characteristic local fields from the point of view of mono-anabelian geometry. 

Let 
\begin{itemize}
    \item $k$ be a mixed-characteristic local field and  
    \item $\overline{k}$ an algebraic closure of $k$. 
\end{itemize}
Then we shall write 
\begin{itemize}
    \item $G_k \defeq \Gal(\overline{k}/k)$, 
    \item $\Aut(G_k)$ for the automorphism group of $G_k$, and 
    \item $\Out(G_k)$ for the outer automorphism group of $G_k$.  
\end{itemize}
We shall say that a finite subgroup of $\Out(G_k)$ is \emph{quasi-geometric} [cf. \cite{Hoshi-topics_MLF}, Definition 6.5, (ii)] 
if its inverse image in $\Aut(G_k)$ is isomorphic to the absolute Galois group of some mixed-characteristic local field. 
Let us recall that the natural homomorphism
\[
\operatorname{Aut}(k) \hookrightarrow \operatorname{Out}(G_{k})
\]
is injective [cf., e.g., \cite{Hoshi-intro_mono_MLF}, Proposition 2.1]. 
Thus, we may regard $\operatorname{Aut}(k)$ as a subgroup of $\operatorname{Out}(G_{k})$ via this injection. 
Moreover, it follows from \cite{Hoshi-topics_MLF}, Proposition 6.7, that $\operatorname{Aut}(k) \subseteq \Out(G_k)$ is a quasi-geometric subgroup. 
Furthermore, Hoshi proved that, for a Galois-specifiable mixed-characteristic local field $k$ [cf. \cite{Hoshi-topics_MLF}, Definition 6.1], the set of strictly quasi-geometric subgroups [cf. \cite{Hoshi-topics_MLF}, Definition 6.5, (iii)] 
coincides with
the $\Out(G_k)$-orbit of $\Aut(k)$ [cf. \cite{Hoshi-topics_MLF}, Theorem 6.12, (ii)]. 
On the other hand, he also gave examples in which there exist at least two $\Out(G_k)$-conjugacy classes of strictly quasi-geometric subgroups [cf. \cite{Hoshi-topics_MLF}, Remark 8.6.1, (ii)].
Under this state of affairs, one may consider the following problem:

\begin{problem}\label{problem:1}
    Is every finite subgroup of $\Out(G_k)$ quasi-geometric?
\end{problem}

We next consider the following problem for number fields: 

\begin{problem}\label{problem:2}
    Let $K$ be a number field and $\overline{K}$ an algebraic closure of $K$. 
    We shall write $G_K \defeq \Gal(\overline{K}/K)$.  
    Let $\Delta \subseteq \Out(G_K)$ be a finite subgroup. 
    Write $\widetilde{\Delta} \subseteq \Aut(G_K)$ for the inverse image of $\Delta$ by the natural surjection $\Aut(G_K) \twoheadrightarrow \Out(G_K)$. Is the profinite group $\widetilde{\Delta}$ isomorphic to the absolute Galois group of some number field?
\end{problem}

In the situation of Problem \ref{problem:2}, the Neukirch--Uchida theorem
[cf. the main theorem of \cite{Uchida_NF}] implies that the natural
homomorphism
\[
\Aut(K) \longrightarrow \Out(G_K)
\]
is an isomorphism. We regard $\Delta$ as a subgroup of $\Aut(K)$ via
the inverse of this isomorphism. 
Write
\[
L \coloneqq K^{\Delta}.
\]
Then $K/L$ is a finite Galois extension with Galois group $\Delta$.
Moreover, the natural conjugation homomorphism
\[
G_L \longrightarrow \Aut(G_K)
\]
identifies $G_L$ with the inverse image $\widetilde{\Delta}$ of
$\Delta$ in $\Aut(G_K)$. 
Consequently, $\widetilde{\Delta} \simeq G_L$. 
Thus, Problem 2 has an affirmative answer. 

The present paper gives a negative answer to Problem \ref{problem:1}.  
More precisely, the main result of the present paper proves that there exist finite subgroups of the outer automorphism groups of the absolute Galois groups of mixed-characteristic local fields that are not quasi-geometric in general. 

First, we prove the following theorem [cf. Theorem \ref{theorem:pro-pness_of_kernel}]:

\begin{introtheorem}
\label{AAAAA}
    Let $p$ be a prime number, $s$ a positive integer, $J$ a topologically finitely generated profinite group, and $P$ a closed normal subgroup of $J$ such that $P$ is a pro-$p$ group. 
    We shall write $\Aut^P(J) \subseteq \mathrm{Aut}(J)$ for the subgroup of automorphisms of $J$ that preserve $P$.  
    Let $P^{p,i,s}$ be the $i$-th term of the descending $p^s$-central series of $P$ with $P^{p,1,s}=P$. 
    Then, for each $i \ge 2$, the subgroup $\Ker(\Aut^P(J) \to \Aut(J/P^{p,i,s}))$ is a pro-$p$ group. 
\end{introtheorem}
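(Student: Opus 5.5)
Set $K \defeq \Ker(\Aut^P(J) \to \Aut(J/P^{p,i,s}))$. Since $i \ge 2$, every element of $K$ acts trivially on $J/P^{p,2,s}$, where $P^{p,2,s} = [P,P]P^{p^s}$. It therefore suffices to treat $i=2$, because a closed subgroup of a pro-$p$ group is pro-$p$. We first arrange for $K$ to be profinite. The group $J$ is topologically finitely generated, so it has only finitely many open subgroups of each index. The characteristic open subgroups $J_n$, defined as the intersection of all open subgroups of index $\le n$, form a basis of neighborhoods of $1$. We topologize $\Aut(J)$ as $\varprojlim_n \Aut(J/J_n)$, which is profinite, and $\Aut^P(J)$ and $K$ are then closed subgroups. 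The claim that $K$ is pro-$p$ amounts to showing that every finite continuous quotient of $K$ is a $p$-group. Equivalently, every element of $K$ should have $p$-power order in each finite quotient $\Aut(J/N)$, where $N$ runs over characteristic open subgroups of $J$.

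We reduce to the following finite statement. Let $N$ be a characteristic open subgroup of $J$. Set $\bar J = J/(N \cap P^{p,2,s})$, which again has characteristic open kernel, and let $\bar P$ be the image of $P$, a finite normal $p$-subgroup. Let $\bar\sigma$ be the automorphism induced by $\sigma \in K$. Then $\bar\sigma$ preserves $\bar P$ and acts trivially on $\bar J/\Phi_s(\bar P)$, where $\Phi_s(\bar P) = [\bar P,\bar P]\bar P^{p^s}$ is contained in the Frattini subgroup of $\bar P$. We aim to show that such an automorphism of a finite group has $p$-power order. The subgroups $N \cap P^{p,2,s}$, for $N$ characteristic open, are cofinal among the relevant kernels, because $P^{p,2,s}$ is closed and preserved by $\Aut^P(J)$. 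This finite statement therefore gives the theorem.

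For the finite statement, write $G = \bar J$, $Q = \bar P$, and $\Phi = \Phi_s(Q) \subseteq \Phi(Q)$. Let $A$ be the group of automorphisms of $G$ that preserve $Q$ and induce the identity on $G/\Phi$. Restriction gives a map $A \to \Aut(Q)$.
\begin{itemize}
\item Its image acts trivially on $Q/\Phi$, hence trivially on $Q/\Phi(Q)$. By Burnside's theorem (together with Hall's result that the kernel of $\Aut(Q) \to \Aut(Q/\Phi(Q))$ is a $p$-group), this image is a $p$-group.
\item Its kernel consists of automorphisms $\alpha$ trivial on $Q$ and on $G/Q$. We write $\alpha(g) = g\,\delta(g)$ with $\delta(g) \in Q$. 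Since $\alpha$ is trivial on $Q$ and $\delta(g)$ lies in $Q$, one checks that $\delta$ is a crossed homomorphism $G \to Z(Q)$. This group of automorphisms embeds into the group $Z^1(G/Q, Z(Q))$ of such crossed homomorphisms. That group is an abelian $p$-group because $Z(Q)$ is, and so the kernel is a $p$-group.
\end{itemize}
Hence $A$ is a $p$-group, and $\bar\sigma$ has $p$-power order. (Compatibility of the crossed homomorphism with conjugation needs care, but the standard argument for automorphisms centralizing a normal subgroup and its quotient applies here.)

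The main obstacle I expect is the reduction step: making precise the topology on $\Aut^P(J)$ and checking that the kernels $N \cap P^{p,2,s}$ are cofinal, so that profiniteness and the finite-quotient criterion apply. This is the step where topological finite generation of $J$ is essential. The finite group-theoretic core is classical.
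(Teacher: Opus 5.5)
Your finite-group core is the paper's argument: restrict to the image of $P$, apply Hall's theorem, and embed the kernel of restriction into $Z^1$ of the quotient with values in the centre of that image. The gap is in the reduction step, which fails as written.

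\textbf{The gap.} The subgroup $N \cap P^{p,2,s}$ is in general neither open nor characteristic in $J$. The group $P^{p,2,s}$ need not have finite index in $J$, or even in $P$, and it is only $\Aut^P(J)$-stable. Take the case this paper is built for: $J = G_k$ and $P$ the wild inertia subgroup. There $P$ has infinite index in $J$ and is free pro-$p$ of infinite rank, so $P/P^{p,2,s}$ is infinite. Since $N \cap P^{p,2,s} \subseteq P^{p,2,s}$, your $\bar J = J/(N \cap P^{p,2,s})$ and $\bar P$ surject onto $J/P^{p,2,s}$ and $P/P^{p,2,s}$, so both are infinite. Consequently your finite statement does not apply to $\bar J$. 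Hall's theorem is a statement about finite $p$-groups, and $Z^1(\bar J/\bar P, Z(\bar P))$ need not be a finite $p$-group when $\bar J/\bar P$ is infinite. So you do not obtain that $\bar\sigma$ has $p$-power order.

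\textbf{The fix.} No intersection is needed. Take $\bar J = J/N$ for $N$ a characteristic open subgroup; this is the paper's $H = J/V$, $W = PV/V$. In a finite continuous quotient, the image of the closed subgroup $P^{p,2,s}$ is exactly $\bar P^{p^s}[\bar P,\bar P] = \Phi_s(\bar P)$, the subgroup generated by the images of $p^s$-th powers and commutators. Hence the induced $\bar\sigma$ preserves $\bar P$ and is already trivial on $\bar J/\Phi_s(\bar P)$.

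Your finite statement then shows that the image of $K$ in each $\Aut(J/N)$ is a $p$-group. The kernels of $K \to \Aut(J/N)$ form a basis of open normal subgroups of $K$, so $K$ is pro-$p$. With this correction your proof coincides with the paper's. The paper additionally replaces $P^{p,2,s}$ by $P^{p,2,1} = \Phi(P)$ at the outset, which is equivalent to your observation that $\Phi_s(Q) \subseteq \Phi(Q)$.
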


Furthermore, we obtain the following result [cf. Theorem \ref{theorem:existence_of_torsion_Aut(G)}]:

\begin{introtheorem}
\label{BBBBB}
    Let $p$ be an odd prime number and $k$ a finite extension of $\bQ_p$. 
    Suppose that $d_k \defeq \left[ k \colon \bQ_p \right] \geq 3$. 
    Let $\mathfrak{S}_n$ be the symmetric group on $n$ letters for a positive integer $n$.
    Then, for every subgroup $H$ of $\left(\bZ/2\bZ \right)^{\lfloor \frac{d_k-1}{2} \rfloor} \rtimes \mathfrak{S}_{\lfloor \frac{d_k-1}{2} \rfloor}$ whose order is prime to $p$, there exists a subgroup of $\Aut(G_k)$ that is isomorphic to $H$.
\end{introtheorem}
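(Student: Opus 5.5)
The plan is to reduce the statement to constructing automorphisms that satisfy the relations of $H$ only modulo a pro-$p$ kernel, and then to lift $H$ by a Schur--Zassenhaus argument. Theorem \ref{AAAAA} supplies exactly the pro-$p$ kernel needed. Put $J \defeq G_k$ and let $P \subseteq G_k$ be the wild inertia subgroup. It is a closed normal pro-$p$ subgroup and it is characteristic, so $\Aut^P(G_k)=\Aut(G_k)$. Then
\[
K \defeq \Ker\bigl(\Aut(G_k) \to \Aut(G_k/P^{p,2,s})\bigr)
\]
is a normal subgroup of $\Aut(G_k)$, and it is pro-$p$ by Theorem \ref{AAAAA} (with $s$ chosen so that $p^s$ is the order of the $p$-power roots of unity in $k$, or simply $s=1$). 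Since $|H|$ is prime to $p$, it suffices to embed $H$ into $\Aut(G_k)/K$. Indeed, the preimage $\widetilde H \subseteq \Aut(G_k)$ of such an embedded copy is then an extension of $H$ by the pro-$p$ group $K$. By the profinite Schur--Zassenhaus theorem, $\widetilde H \to H$ splits, which produces a subgroup of $\Aut(G_k)$ isomorphic to $H$. It is enough to do this for $H$ equal to the $p'$-group relevant to the generators; a general $H$ is then obtained as the inverse image of a subgroup of the image.

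To build the embedding, I will use the Jannsen--Wingberg presentation of $G_k$ for $p$ odd. Its generators are $\sigma,\tau,x_0,\dots,x_d$, with relations $\tau^\sigma=\tau^q$ and a single "Demu\v{s}kin-type" relation
\[
x_0^{\sigma}=\langle x_0,\tau\rangle^{\pi}\,x_1^{p^s}[x_1,y_1][x_2,y_2]\cdots ,
\]
where the variables $x_2,y_2,\dots$ run through $m=\lfloor\frac{d_k-1}{2}\rfloor$ symplectic pairs $(x_{2j},x_{2j+1})$, after splitting off the special generators $x_0,x_1$ and, for $d_k$ even, one extra generator. On these pairs I will define two kinds of substitutions on the free profinite group:
\begin{itemize}
\item $\mathfrak{S}_m$ permuting the pairs;
\item for each pair, an involution of the shape $x\mapsto y$, $y\mapsto x$, which sends $[x,y]$ to $[x,y]^{-1}$ up to conjugation.
\end{itemize}
The other generators are fixed, possibly after composing with a suitable inner twist. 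The point to verify is that each substitution preserves the defining relations modulo the normal closure corresponding to $P^{p,2,s}$. Modulo $P^{p,2,s}$ the relation is essentially a bilinear (commutator) condition on $P/P^{p,2,s}$, which is an abelian $\bZ/p^s$-module with $G_k/P$-action. There, the sign change on each pair can be compensated by an automorphism of the symplectic form, using that $p$ is odd so that $-1\ne 1$ and $2$ is invertible. Each substitution therefore defines an element of $\Aut(G_k/P^{p,2,s})$ lying in the image of $\Aut(G_k)$, for instance because it lifts to an actual automorphism obtained by adjusting the generators by elements of $P^{p,2,s}$ and invoking the presentation.

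Next, I check that the elements so obtained in $\Aut(G_k)/K$ satisfy the relations of $(\bZ/2\bZ)^m\rtimes\mathfrak{S}_m$. Faithfulness is seen from the induced action on $P/P^{p,2,s}\otimes\mathbb{F}_p$, where the pair generators give $m$ independent hyperbolic planes being signed and permuted. This yields an injection of $(\bZ/2\bZ)^m\rtimes\mathfrak{S}_m$, and hence of $H$, into $\Aut(G_k)/K$, and the first paragraph finishes the proof.

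The main obstacle I expect is the second step: showing that the pair-swapping substitutions genuinely come from automorphisms of $G_k$, and not merely from automorphisms of the quotient. The relation must be preserved exactly in the free profinite group modulo the relation subgroup, not just up to higher commutators. My first attempt will be to correct the images of $x_0$ (or $\sigma$) by a suitable element so that the relation is preserved on the nose, using the flexibility of the Jannsen--Wingberg relation. If that fails, I will instead use the Jannsen--Wingberg uniqueness statement: any system of generators satisfying the same relations with the same invariants defines an automorphism.
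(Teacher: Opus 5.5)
You have the same reduction as the paper: $K=\Ker(\Aut(G_k)\to\Aut(G_k/P^{p,2,s}))$ is pro-$p$ by Theorem \ref{theorem:pro-pness_of_kernel}, so any $p'$-subgroup of the \emph{image} of $\Aut(G_k)$ in $\Aut(G_k/P^{p,2,s})$ lifts by Schur--Zassenhaus. The genuine gap is in producing that subgroup of the image.

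The involution you put on a pair ($x\mapsto y$, $y\mapsto x$, all other generators fixed) is not in the image. It turns $[x,y]$ into $[x,y]^{-1}$ in the Jannsen--Wingberg relation, and no correction can absorb this. Take the case $\mu_p\subseteq k$:
\begin{itemize}
\item The action of an element of $\Aut(G_k)$ on $H^1(G_k,\mathbb{F}_p)$ depends only on its image in $\Aut(G_k/P^{p,2,s})$.
\item That action must be a similitude of the nondegenerate alternating cup-product pairing into $H^2(G_k,\mathbb{F}_p)\cong\mathbb{F}_p$.
\item Your map fixes the classes dual to $\sigma$ and $x_0$. Their cup product is nonzero because the relation contains $[\sigma,x_0]$, which forces multiplier $1$.
\item Your map swaps the classes dual to $x$ and $y$, which forces multiplier $-1$. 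For odd $p$ this is impossible.
\item Inner twists act trivially on $H^1(G_k,\mathbb{F}_p)$, so composing with them does not help.
\end{itemize}
The right involution is the paper's $x\mapsto x^{-1}$, $y\mapsto y^{-1}$, which preserves $[x,y]$ modulo $P^{p,3,s}$ by bilinearity. Indeed, $\pm1$ are the only symplectic automorphisms of order dividing $2$ of a hyperbolic plane over $\bZ/p^s$.

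Your remark that modulo $P^{p,2,s}$ the relation is a bilinear condition also mixes up levels. Since $[P,P]\subseteq P^{p,2,s}$, commutators are invisible in $G_k/P^{p,2,s}$, and essentially any such substitution gives an automorphism there. The symplectic constraint lives in $P^{p,2,s}/P^{p,3,s}$.

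Even with the correct involutions, the relations hold only modulo $P^{p,3,s}$, not exactly. So neither adjusting generators to get the relation on the nose nor a uniqueness statement that needs exact relations closes the step you flag as the main obstacle. The missing ingredient is Proposition \ref{proposition:lift_of_Aut(G/P^3)_add}, from NSW, Theorem 7.5.15, and Wingberg's Satz 2. It says that for $s=s(G)$, the images of $\Aut_{G/P}(G)$ and of $\Aut_{G/P}(G/P^{p,3,s})$ in $\Aut_{G/P}(G/P^{p,2,s})$ coincide. Here $s(G)$ is the exponent in the Jannsen--Wingberg relation, defined via the maximal tamely ramified extension, not via $\mu(k)$.

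With this, the paper proceeds in four steps:
\begin{itemize}
\item It builds the hyperoctahedral group inside $\Aut_{G/P}(G/P^{p,3,s})$ from pair permutations and the maps $x\mapsto x^{-1}$, $y\mapsto y^{-1}$ (Theorem \ref{theorem:nontrivial_sub_Aut(G_3)}).
\item It checks that this group injects into $\Aut(G/P^{p,2,s})$. This uses the $\mathbb{F}_p$-linear independence of the images of $x_2,\dots,x_d$ in $P/P^{p,2,1}$ (Lemma \ref{lemma:linear_independent_F_p}), which you assume without proof.
\item It lifts the image to $\Aut(G)$ by the proposition.
\item Only then does your Schur--Zassenhaus step apply.
\end{itemize}
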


Finally, we obtain the following result [cf. Corollary \ref{corollary:non-quasi_geometric_k_1}]:

\begin{introtheorem}
\label{CCCCC}
        Let $p$ be an odd prime number and $k$ a finite extension of $\bQ_p$. 
        Suppose that $\left[ k \colon \bQ_p \right] \geq 3$. 
        Then there exists a non-quasi-geometric finite subgroup of $\Out(G_k)$.
\end{introtheorem}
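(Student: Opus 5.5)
We deduce Theorem \ref{CCCCC} from Theorem \ref{BBBBB} by choosing a finite subgroup of $\Aut(G_k)$ whose structure is incompatible with being the absolute Galois group of a mixed-characteristic local field. Since $d_k \geq 3$, we have $\lfloor \frac{d_k-1}{2} \rfloor \geq 1$, so the group $(\bZ/2\bZ)^{\lfloor \frac{d_k-1}{2} \rfloor} \rtimes \mathfrak{S}_{\lfloor \frac{d_k-1}{2} \rfloor}$ contains a subgroup $H \simeq \bZ/2\bZ$. Because $p$ is odd, the order of $H$ is prime to $p$. Theorem \ref{BBBBB} therefore gives an element $\alpha \in \Aut(G_k)$ of order $2$.

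The first step is to show that $\alpha$ is not inner. Absolute Galois groups of mixed-characteristic local fields (more generally, of fields that are not real closed) are torsion-free by Artin--Schreier. Since $G_k$ is also center-free, $G_k \to \Aut(G_k)$ is injective, and an inner automorphism of finite order would come from a torsion element of $G_k$, which must be trivial. Hence the image $\Delta \subseteq \Out(G_k)$ of $\langle \alpha \rangle$ has order $2$.

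The second step shows that $\Delta$ is not quasi-geometric. Its inverse image $\widetilde{\Delta} \subseteq \Aut(G_k)$ contains $\alpha$, which is a nontrivial torsion element. If $\widetilde{\Delta}$ were isomorphic to $G_K$ for some mixed-characteristic local field $K$, then $\widetilde{\Delta}$ would be torsion-free, which is a contradiction. So $\Delta$ is a non-quasi-geometric finite subgroup of $\Out(G_k)$.

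The substantive input is entirely Theorem \ref{BBBBB}; the only other facts needed are torsion-freeness and center-freeness of $G_k$, both standard. The one point to check is that passing from $\Aut$ to $\Out$ does not kill $\alpha$, and the torsion-freeness argument in the first step handles this.
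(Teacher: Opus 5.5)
Your proposal is correct and follows essentially the paper's route: the paper likewise takes a nontrivial finite subgroup of $\Aut(G_k)$ of order prime to $p$ supplied by Theorem~\ref{BBBBB}, and observes via Theorem~\ref{theorem:non-quasi-geometric_Out(G_k)} (with $G=H=G_k$) that the inverse image of its image in $\Out(G_k)$ contains torsion, whereas groups of MLF-type are torsion-free by Lemma~\ref{lemma:torsionfreeness_of_G}, (iv). Your separate check that $\alpha$ is not inner is correct but not needed, since the torsion argument in your second step already excludes the trivial subgroup, whose inverse image $\mathrm{Inn}(G_k)\simeq G_k$ is torsion-free.
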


We note that, independently, and during the same period, Shun Ishii obtained a result similar to Theorem \ref{theorem:pro-pness_of_kernel}.

\section{Notational conventions}
\label{section0}

\subsubsection*{\sc Topological groups} 
Let $G$ be a topological group. Then we shall write $G^{\rm ab}$ for the {\it abelianization} of $G$ [i.e., the quotient of $G$ by the closure of the commutator subgroup of $G$], $G^{\textrm{ab-tor}}$ for the closure of the subgroup of torsion elements of $G^{\rm ab}$, and $G^{\rm ab/tor}$ for the quotient of $G^{\rm ab}$ by $G^{\textrm{ab-tor}}$. 
For a closed normal subgroup $P$ of $G$, 
we shall write $\Aut^P(G) \subseteq \mathrm{Aut}(G)$ for the subgroup of automorphisms of $G$ that preserve $P$ 
and $\Aut_{G/P}(G) \subseteq \mathrm{Aut}^P(G)$ for the subgroup of automorphisms of $G$ that induce the identity automorphism of the quotient $G/P$. 

Let $H$ be a profinite group. 
Note that if $H$ is topologically finitely generated, then $\Aut(H)$ has a natural profinite group structure [cf. \cite{ribes-profinite_Groups}, Proposition 4.4.3]. 
Let $p$ be a prime number and $s$ a positive integer. 
Then we shall write $H^{p,1,s} \defeq H$ and, for each positive integer $i$, $H^{p,i+1,s} \defeq \overline{(H^{p,i,s})^{p^s}\left[H,H^{p,i,s}\right]}$.  Thus, $\{H^{p,i,s}\}_{i \geq 1}$ forms the \textit{descending $p^s$-central series} of $H$. 
If, moreover, $H$ is a pro-$p$ group, and we write $\Phi(H)$ for the Frattini subgroup of $H$, 
then one verifies immediately that $H^{p,2,1} = \Phi(H)$. 

\subsubsection*{\sc Modules} 
Let $M$ be a module. If $n$ is a positive integer, then we shall write
$M[n] \subseteq M$ for the submodule obtained by forming the kernel of the endomorphism of $M$
given by multiplication by $n$.
Moreover, if $p$ is a prime number, then we shall write
\[
M[p^\infty]
\defeq
\bigcup_{n \geq 1} M[p^n].
\]

\subsubsection*{\sc Fields} 
Let $K$ be a field. Then we shall write $\mu(K)$ for the group of roots of unity in $K$. 
If, moreover, $K$ is algebraically closed and of characteristic zero, then we shall
write 
    \[
        \Lambda(K) \defeq \varprojlim_n \mu(K)[n]
    \]
--- where the projective limit is taken over the positive integers $n$ --- and refer to
$\Lambda(K)$ as the {\it cyclotome} associated to $K$. 

We shall refer to a field isomorphic to a finite extension of $\Qp$, for some prime number $p$, as a {\it mixed-characteristic local field}.  
If $k$ is a mixed-characteristic local field, then we shall write 
\begin{itemize}
 \item  $k^{(d=1)} \subseteq k$ for the [uniquely determined] minimal mixed-characteristic local field contained in $k$,
 \item $f_k$ for the absolute residue degree of $k$, 
 \item $d_k \defeq [k:k^{(d=1)}]$ for the degree of the finite extension $k/k^{(d=1)}$, and 
 \item $p_k$ for the residue characteristic of $k$. 
\end{itemize}
We shall refer to a group isomorphic to the absolute Galois group of 
a mixed-characteristic local field as a group of {\it MLF-type} 
[cf.\ \cite{Hoshi-intro_mono_MLF}, Definition 3.1].    
In the present paper, let us always regard a group of MLF-type as a 
profinite group by means of the profinite topology 
discussed in \cite{Hoshi-intro_mono_MLF}, Proposition 1.2, (i)
[cf.\ also \cite{Hoshi-intro_mono_MLF}, Proposition 1.2, (ii)].

\section{Pro-$p$ kernels in automorphism groups of profinite groups}
\label{section1}

In the present \S \ref{section1}, we prove some 
results in profinite group theory, which will be applied in \S 2.
Note that the main result of the present \S 1 may be regarded as a generalization of a well-known result in profinite group theory 
[cf. Remark \ref{remark:generalization_of_ia_pro-p} below]. 

\begin{lem}\label{lemma:profinite_structrure_of_Ker}
    Let $J$ be a topologically finitely generated profinite group, $i,s$ positive integers, $p$ a prime number, and $P$ a closed normal subgroup of $J$. 
    Then $\Aut_{J/P^{p, i,s}}(J)$ is a closed subgroup of $\Aut(J)$. 
\end{lem}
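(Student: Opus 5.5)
Here $\Aut_{J/P^{p,i,s}}(J)$ only makes sense if $P^{p,i,s}$ is preserved by the automorphisms in question. So I read it as the kernel of the natural map $\Aut^{P^{p,i,s}}(J) \to \Aut(J/P^{p,i,s})$. I would first check that $P^{p,i,s}$ is a closed normal subgroup of $J$, by induction on $i$. The base case is $P^{p,1,s} = P$. For the inductive step, if $P^{p,i,s}$ is normal in $J$, then $(P^{p,i,s})^{p^s}$ and $[P,P^{p,i,s}]$ are both stable under conjugation by $J$, so the closure of the subgroup they generate is closed and normal. With this in hand, the plan is to exhibit $\Aut_{J/P^{p,i,s}}(J)$ as an intersection of closed subsets of $\Aut(J)$.

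Recall the profinite topology on $\Aut(J)$ from \cite{ribes-profinite_Groups}, Proposition 4.4.3. Since $J$ is topologically finitely generated, the characteristic open subgroups $U$ of $J$ (for instance, the intersection of all open subgroups of a given index) form a fundamental system of neighborhoods of $1$. A basis of open neighborhoods of the identity in $\Aut(J)$ is then given by the kernels $\Ker(\Aut(J) \to \Aut(J/U))$. Equivalently, this is the compact-open topology, and evaluation maps $\Aut(J) \to J$, $\alpha \mapsto \alpha(x)$, are continuous.

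Write $N \defeq P^{p,i,s}$ and argue pointwise. An automorphism $\alpha$ lies in $\Aut_{J/N}(J)$ if and only if $\alpha(x)x^{-1} \in N$ for every $x \in J$. Indeed, this condition gives $\alpha(N) \subseteq N$, and applying it to $\alpha^{-1}$ (substitute $x=\alpha(y)$, obtaining $y\alpha(y)^{-1}\in N$) gives $\alpha^{-1}(N)\subseteq N$, so $\alpha$ preserves $N$ and induces the identity on $J/N$. Hence
\[
\Aut_{J/N}(J) = \bigcap_{x \in J} \{\alpha \in \Aut(J) : \alpha(x)x^{-1} \in N\}.
\]
Each set on the right is the preimage of the closed set $N$ under the continuous map $\alpha \mapsto \alpha(x)x^{-1}$, so it is closed. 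The intersection is therefore closed, and it is clearly a subgroup.

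The only real obstacle is justifying that evaluation is continuous for the profinite topology. I would verify this directly. If $\alpha' \in \alpha \cdot \Ker(\Aut(J)\to\Aut(J/U))$ with $U$ characteristic open, then $\alpha'(x) \in \alpha(x)U$. Since the $U$ are cofinal among open neighborhoods of $1$, this gives continuity.
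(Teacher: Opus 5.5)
Your proof is correct, but it takes a different route from the paper.

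\textbf{The paper's route.} The paper first reduces, calling it immediate, to showing that $\Aut^P(J)$ is closed in $\Aut(J)$. It then proves this by passing to finite quotients. For a fundamental system $\{V_j\}$ of characteristic open subgroups, with $\psi_j\colon\Aut(J)\to\Aut(J/V_j)$, it shows
\[
\Aut^P(J)=\bigcap_j \psi_j^{-1}\bigl(\Aut^{PV_j/V_j}(J/V_j)\bigr).
\]
The key input is the equality $P=\bigcap_j PV_j$ for closed $P$.

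\textbf{Your route.} You work pointwise instead. You characterize $\Aut_{J/N}(J)$ by the condition $\alpha(x)x^{-1}\in N$ for all $x\in J$. You then use continuity of the evaluation maps $\alpha\mapsto\alpha(x)$, which you verify correctly from the basis of neighborhoods $\Ker(\Aut(J)\to\Aut(J/U))$ with $U$ characteristic.

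\textbf{Comparison.} Your argument is shorter and self-contained. It proves closedness of $\Aut_{J/N}(J)$ for an arbitrary closed normal subgroup $N$ of $J$. It does not use that $P^{p,i,s}$ is characteristic in $P$. It also avoids the step the paper leaves implicit, namely that $\Aut_{J/P^{p,i,s}}(J)$ is the kernel of a continuous homomorphism $\Aut^P(J)\to\Aut(J/P^{p,i,s})$. The paper's route, in turn, yields closedness of $\Aut^P(J)$ itself as a byproduct. That is the ambient group in which Theorem~\ref{theorem:pro-pness_of_kernel} and Corollary~\ref{corollary:equiv_of_existence_of_-l-torsion} operate.

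\textbf{Your reading of the notation.} Your interpretation of $\Aut_{J/N}(J)$ (automorphisms preserving $N$ and trivial on $J/N$) agrees with the paper's description $\Ker\bigl(\Aut^P(J)\to\Aut(J/P^{p,i,s})\bigr)$. Your pointwise condition together with $N\subseteq P$ forces $\alpha(P)=P$. Conversely, $N$ is characteristic in $P$.
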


\begin{proof}
    It is immediate that, to prove that $\Aut_{J/P^{p, i,s}}(J)$  is a closed subgroup of $\Aut(J)$, it suffices to show that $\Aut^P(J)$ is a closed subgroup of $\Aut(J)$.
    Let $\{V_j\}_{j \in I}$ be a fundamental system of open neighborhoods of the identity element of $J$ consisting of open characteristic subgroups of $J$. [Note that one verifies easily that since $J$ is topologically finitely generated, such a fundamental system always exists.]
    For each $j$, write
    \[
    \psi_j \colon \Aut(J) \longrightarrow \Aut(J/V_j)
    \]
    for the natural homomorphism. 
    Since [it is immediate that] $\Aut^{PV_j/V_j}(J/V_j)$ is a closed subgroup of $\Aut(J/V_j)$, to show that $\Aut^P(J)$ is closed in $\Aut(J)$, it suffices to verify that
    \[
    \Aut^P(J) = \bigcap_j \psi_j^{-1}\bigl(\Aut^{PV_j/V_j}(J/V_j)\bigr).
    \]
    If $\alpha \in \Aut^P(J)$, then $\alpha(P) = P$.
    Hence, for every $j$, $\alpha(PV_j) = PV_j$, so that $\psi_j(\alpha) \in \Aut^{PV_j/V_j}(J/V_j)$. 
    Thus, the inclusion $\Aut^P(J) \subseteq \bigcap_j \psi_j^{-1}\bigl(\Aut^{PV_j/V_j}(J/V_j)\bigr)$ is proved.

    Next, we suppose that $\alpha \in \bigcap_j \psi_j^{-1}(\Aut^{PV_j/V_j}(J/V_j))$.  
    Then, for every $j$, we have $\alpha(PV_j) = PV_j$.  
    Let $\pi \colon J \twoheadrightarrow J/P$ be the natural projection.  
    Since $\{V_j\}_{j \in I}$ forms a fundamental system of open neighborhoods of the identity element of $J$, $\{\pi(V_j)\}_{j \in I}$ forms a fundamental system of open neighborhoods of the identity element of $J/P$.  
    Hence
    \[
    \pi\!\left(\bigcap_j PV_j\right)
     \subseteq 
     \bigcap_j \pi(PV_j)
     = \bigcap_j \pi(V_j)
     = \{1\}.
    \]
    Thus, $\bigcap_j PV_j \subseteq \Ker(\pi) = P$, i.e., $\bigcap_j PV_j = P$.  
    Consequently,
    \[
    \alpha(P)
     = \alpha\!\left(\bigcap_j PV_j\right)
     = \bigcap_j \alpha(PV_j)
     = \bigcap_j PV_j
     = P.
    \]
    Therefore, we obtain the desired equality
    \[
    \Aut^P(J) = \bigcap_j \psi_j^{-1}\bigl(\Aut^{PV_j/V_j}(J/V_j)\bigr),
    \]
    which in particular shows that $\Aut^P(J)$ is a closed subgroup of $\Aut(J)$.
\end{proof}

\begin{theorem}\label{theorem:pro-pness_of_kernel}
    Let $p$ be a prime number, $s$ a positive integer, $J$ a topologically finitely generated profinite group, and $P$ a closed normal subgroup of $J$ such that $P$ is a pro-$p$ group.
    Then, for each $i \ge 2$, the subgroup $\Aut_{J/P^{p, i,s}}(J)$ is a pro-$p$ group.
\end{theorem}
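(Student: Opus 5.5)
The plan is to reduce to finite quotients and then use two standard facts: a $p'$-automorphism of a finite $p$-group that acts trivially on its Frattini quotient is trivial (Burnside), and a "translation by a $p$-element" argument.

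Write $Q \defeq P^{p,i,s}$. Since each $P^{p,j,s}$ is characteristic in $P$ and $P$ is normal in $J$, $Q$ is a closed normal subgroup of $J$, so $\Aut_{J/Q}(J)$ makes sense. Any $\alpha \in \Aut_{J/Q}(J)$ satisfies $\alpha(x) \in xQ$ for all $x\in J$. Hence $\alpha(P) \subseteq PQ = P$, and the same for $\alpha^{-1}$, so $\alpha \in \Aut^P(J)$. By Lemma \ref{lemma:profinite_structrure_of_Ker}, $\Aut_{J/Q}(J)$ is a closed subgroup of the profinite group $\Aut(J)$, hence profinite.

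Fix a fundamental system $\{V_j\}$ of open characteristic subgroups of $J$. Then $\Aut_{J/Q}(J)$ is the inverse limit of its images in the finite groups $\Aut(J/V_j)$. It therefore suffices to show that each such image is a $p$-group, i.e.\ that every element of the image of order prime to $p$ is trivial.

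So fix $j$, put $G \defeq J/V_j$, and let $\bar P, \bar Q$ denote the images of $P$ and $Q$. Then $\bar P$ is a finite normal $p$-subgroup of $G$. Let $\alpha$ be an element of the image whose order $m$ is prime to $p$; it acts trivially on $G/\bar Q$.

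The first step is to show that $\alpha$ is trivial on $\bar P$. Since $i \geq 2$, we have $Q \subseteq P^{p,2,s} = \overline{P^{p^s}[P,P]}$. Taking images in the finite group gives $\bar Q \subseteq \bar P^{p^s}[\bar P,\bar P] \subseteq \bar P^{p}[\bar P,\bar P] = \Phi(\bar P)$. Thus $\alpha|_{\bar P}$ is an automorphism of the $p$-group $\bar P$ of $p'$-order acting trivially on $\bar P/\Phi(\bar P)$. By Burnside's theorem it is the identity.

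The second step is to show $\alpha$ is trivial on all of $G$. For $g \in G$ write $\alpha(g) = gq$ with $q \in \bar Q \subseteq \bar P$. Since $\alpha$ fixes $\bar P$ pointwise, induction gives $\alpha^n(g) = gq^n$ for all $n$. Taking $n=m$ yields $q^m = 1$. But $q$ lies in a $p$-group and $m$ is prime to $p$, so $q=1$. Hence $\alpha = \id$, the image is a $p$-group, and passing to the limit shows $\Aut_{J/Q}(J)$ is pro-$p$.

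The main point needing care is the Frattini step: $i \ge 2$ is exactly what forces $\bar Q \subseteq \Phi(\bar P)$, and one must check that the image of the closure $\overline{P^{p^s}[P,P]}$ in the finite quotient lies in $\bar P^{p}[\bar P,\bar P]$. This holds because images of closures in finite discrete quotients equal images of the dense subgroups. Everything else is routine.
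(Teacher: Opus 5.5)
Your proof is correct and follows essentially the same route as the paper. You reduce to the finite quotients $J/V$, apply the Burnside--Hall fact that automorphisms of the finite $p$-group $\bar P$ acting trivially on $\bar P/\Phi(\bar P)$ form a $p$-group, and then treat automorphisms that are trivial on $\bar P$ separately; the only difference is that you argue element-wise ($\alpha^m(g)=gq^m$ forces $q=1$ for $p'$-order $\alpha$), where the paper shows that $\rho(K_V)$ is a $p$-group and that the kernel $A$ embeds into the $p$-group $Z^1(H/W,Z(W))$.
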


\begin{proof}
    We denote $\Aut_{J/P^{p, 2,1}}(J)$ by $K$. 
    Then it follows from Lemma \ref{lemma:profinite_structrure_of_Ker} that $K$ is a closed subgroup of $\Aut(J)$, hence has a natural profinite group structure. 
    To prove this theorem, since [it is immediate that] $\Aut_{J/P^{p, i,s}}(J) \subseteq K$, it suffices to verify that $K$ is pro-$p$, i.e., that for every finite topological quotient 
    \[f \colon K \twoheadrightarrow Q,\]
    the group $Q$ is a $p$-group.  
    Since $\Ker(f)$ is open in $K$, and $J$ is topologically finitely generated, we obtain a characteristic open subgroup $V$ of $J$ such that $\Aut_{J/V}(J) \cap K \subseteq \Ker(f)$. 
    Then we obtain a natural surjection  
    $$K/\left(K \cap \Aut_{J/V}(J)\right) \twoheadrightarrow K/\Ker(f) \simeq Q.$$ 
    Thus, to show that $Q$ is a $p$-group, it suffices to verify that the quotient $K/\left(K \cap \Aut_{J/V}(J)\right)$ is a $p$-group.

    Next, we shall write $H$ for $J/V$, $W$ for $PV/V$, and $K_V$ for the image of $K$ by the homomorphism 
    $\Aut^P(J) \to \Aut(H)$. 
    Thus, since [it is immediate that] $\Ker(K \to K_V)$ coincides with the intersection $K \cap \Aut_{H}(J)$, we have a natural isomorphism $K / \left(K \cap \Aut_{H}(J)\right) \stackrel{\sim}{\longrightarrow} K_V$.
    It follows from the definition of $K$ that $K_V$ acts trivially on $H/W$. 
    Moreover, if we write
    \[
      \rho : \operatorname{Im}\bigl(\operatorname{Aut}^{P}(J) \to \operatorname{Aut}(H)\bigr)
      \longrightarrow \operatorname{Aut}(W)
    \]
    for the homomorphism obtained by restriction to $W$, then, since $W^{p, 2,1}$ is
    characteristic in $W$, $\rho$ determines an action of $K_V$ on $W/W^{p, 2,1}$.
    On the other hand, since one has
    \[
      P/P^{p, 2,1}=\varprojlim_U (P/U)/(P/U)^{p,2,1}
    \]
    --- where the projective limit is taken over the open normal subgroups $U$ of
    $P$ [cf. \cite{ribes-profinite_Groups}, Corollary 2.8.3] --- it follows from the
    definition of $K$ that the above action of $K_V$ on $W/W^{p, 2,1}$ is trivial.
    Thus, $\rho(K_V) \subseteq \operatorname{Aut}_{W/W^{p, 2,1}}(W)$.
    Thus, since $\Aut_{W/W^{p, 2,1}}(W)$ is a $p$-group [cf. \cite{Hall-IA_p-group}, Section 1.3], $\rho(K_V)$ is a $p$-group. Thus, by considering the exact sequence 
    \[
        \xymatrix{
        1 \ar@{->}[r] & \Ker(\rho) \cap K_V \ar@{->}[r] & K_V \ar@{->}[r] & \rho(K_V) \ar@{->}[r] & 1
        ,} 
    \]
    since $\rho(K_V)$ is a $p$-group, to prove that $K_V$ is a $p$-group, it suffices to prove that $\Ker(\rho) \cap K_V$ is a $p$-group. 
    In the following, let us write $A \defeq \Ker(\rho) \cap K_V$. 
    For every element $\alpha \in A$, since $\alpha$ acts trivially on both $H/W$ and $W$, for all $h \in H$, 
    $$\alpha(h)h^{-1} \in W \cap Z_H(W) \subseteq Z(W).$$ 
    For every $\alpha \in A$, the assignment ``$h \mapsto \alpha(h)h^{-1}$'' determines a $1$-cocycle in $Z^1(H/W, Z(W))$. 
    Moreover, one verifies easily that this correspondence determines an injective homomorphism $A \hookrightarrow Z^1(H/W, Z(W))$. 
    
    Here, since $W$ is a $p$-group, it follows that $Z(W)$ is also a $p$-group, hence $Z^1(H/W, Z(W))$ is a $p$-group. Thus, one concludes that $A$ is a $p$-group, as desired. 
\end{proof}

\begin{remark}\label{remark:generalization_of_ia_pro-p}
    Theorem \ref{theorem:pro-pness_of_kernel} may be regarded as a generalization of \cite{ribes-profinite_Groups}, Lemma 4.5.5. 
   Indeed, \cite{ribes-profinite_Groups}, Lemma 4.5.5, is none other than Theorem \ref{theorem:pro-pness_of_kernel} in the case where $J = P$, $i=2$, and $s=1$.
\end{remark}

\begin{corollary}\label{corollary:equiv_of_existence_of_-l-torsion}
    In the situation of Theorem \ref{theorem:pro-pness_of_kernel}, 
    let $m$ be a positive integer that is prime to $p$ and $\Delta_m$ a finite group of order $m$. 
    Then, for each $i \geq 2$, the following two conditions are equivalent:
    \begin{itemize}
        \item[\rm (i)] There exists a subgroup of $\Aut^P(J)$ that is isomorphic to $\Delta_m$.  
        \item[\rm (ii)] 
        There exists a subgroup of $\Aut^P(J)$ whose image in $\Aut(J/P^{p, i,s})$ is isomorphic to $\Delta_m$. 
    \end{itemize}
    Moreover, for each $i > 2$, the following two conditions are equivalent: 
    \begin{itemize}
        \item[\rm (iii)] There exists a subgroup of $\Aut_{J/P}(J)$ that is isomorphic to $\Delta_m$.  
        \item[\rm (iv)] 
        There exists a subgroup of $\Aut_{J/P}(J)$ whose image in $\Aut_{J/P}(J/P^{p, i,s})$ is isomorphic to $\Delta_m$. 
    \end{itemize}
\end{corollary}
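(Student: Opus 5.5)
The plan is to use Theorem \ref{theorem:pro-pness_of_kernel} to see that the kernel of the reduction map is pro-$p$, and then to pass between (i) and (ii) by a coprimality argument in one direction and profinite Schur--Zassenhaus in the other. First, a remark on the map itself. Since $P^{p,i,s}$ is characteristic in $P$, every element of $\Aut^P(J)$ preserves $P^{p,i,s}$. Hence there is a natural homomorphism
\[
r \colon \Aut^P(J) \to \Aut(J/P^{p,i,s}),
\]
whose kernel is $N \defeq \Aut_{J/P^{p,i,s}}(J)$. By Lemma \ref{lemma:profinite_structrure_of_Ker} and its proof, both $\Aut^P(J)$ and $N$ are closed in $\Aut(J)$, and hence are profinite. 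By Theorem \ref{theorem:pro-pness_of_kernel}, $N$ is a pro-$p$ group for every $i \geq 2$.

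(i)$\Rightarrow$(ii). Let $D \subseteq \Aut^P(J)$ with $D \simeq \Delta_m$. The kernel of $r|_D$ is $D \cap N$. This is a finite subgroup of a pro-$p$ group, so it is a $p$-group. Its order also divides $m$, which is prime to $p$. Hence $D \cap N = \{1\}$, and $r(D) \simeq D \simeq \Delta_m$.

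(ii)$\Rightarrow$(i). Let $E \subseteq \Aut^P(J)$ be a subgroup with $r(E) \simeq \Delta_m$, and put $F \defeq r^{-1}(r(E))$. Since $r(E)$ is finite and $r$ is continuous, $F$ is a closed subgroup of $\Aut^P(J)$. It contains $N$ as a closed normal subgroup, and $F/N \simeq r(E) \simeq \Delta_m$. Thus $N$ is a normal pro-$p$ subgroup of the profinite group $F$ of finite index prime to $p$, i.e.\ a normal Hall subgroup. The profinite Schur--Zassenhaus theorem [cf. \cite{ribes-profinite_Groups}, Theorem 2.3.15] then gives a closed complement $C \subseteq F$ with $C \simeq F/N \simeq \Delta_m$. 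This $C$ is the desired subgroup of $\Aut^P(J)$.

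The equivalence (iii)$\Leftrightarrow$(iv) follows by the same argument with $\Aut^P(J)$ replaced by $\Aut_{J/P}(J)$. For this one needs three checks:
\begin{itemize}
\item $\Aut_{J/P}(J)$ is closed in $\Aut(J)$, since it is the kernel of the continuous map $\Aut^P(J) \to \Aut(J/P)$.
\item It contains $N$, because $P^{p,i,s} \subseteq P$.
\item $r$ restricts to a map $\Aut_{J/P}(J) \to \Aut_{J/P}(J/P^{p,i,s})$ with kernel $N$.
\end{itemize}
The coprimality argument and the Schur--Zassenhaus step then go through verbatim. I do not see where the hypothesis $i>2$ is needed, beyond ensuring that $P^{p,i,s}$ is a proper enough subgroup for the statement to be non-vacuous; the argument above works for all $i \geq 2$.

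The only step needing care is (ii)$\Rightarrow$(i). One must make sure that one applies Schur--Zassenhaus to a genuinely profinite group $F$, using the closedness from Lemma \ref{lemma:profinite_structrure_of_Ker}, rather than to the possibly non-closed subgroup $E$. One must also use the version of Schur--Zassenhaus that allows a finite quotient of order prime to $p$ over a pro-$p$ normal subgroup.
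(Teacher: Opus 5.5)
Your proof is correct and follows the paper's argument: (i)$\Rightarrow$(ii) comes from the pro-$p$-ness of $\Aut_{J/P^{p,i,s}}(J)$ (Theorem \ref{theorem:pro-pness_of_kernel}), and (ii)$\Rightarrow$(i) comes from applying profinite Schur--Zassenhaus (\cite{ribes-profinite_Groups}, Theorem 2.3.15) to the full preimage of the image subgroup, with (iii)$\Leftrightarrow$(iv) handled the same way. Your observation that the argument for (iii)$\Leftrightarrow$(iv) works equally well for $i=2$ is also correct.
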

    
\begin{proof}
    The implication (i) $\Rightarrow$ (ii) follows immediately from Theorem \ref{theorem:pro-pness_of_kernel}. 
    Next, we verify the implication (ii) $\Rightarrow$ (i). 
    Let $\Delta$ be a subgroup of $\Aut^P(J)$ whose image in $\Aut(J/P^{p, i,s})$ is isomorphic to $\Delta_m$. 
    Write ${\Delta^{'}}_m \simeq \Delta_m$ for the image of $\Delta$ by the natural homomorphism $\Aut^P(J) \xrightarrow[]{} \Aut(J/P^{p, i,s})$. 
    We shall write $D_m$ for the inverse image of ${\Delta^{'}}_m$ by the natural homomorphism $\Aut^P(J) \xrightarrow[]{} \Aut(J/P^{p, i,s})$. 
    Then we have a natural exact sequence
    $$
        \xymatrix{
        1 \ar@{->}[r] & \Aut_{J/P^{p, i,s}}(J)  \ar@{->}[r] & D_m \ar@{->}[r] & {\Delta^{'}}_m \ar@{->}[r] & 1.
        }
    $$
    Thus, it follows from \cite{ribes-profinite_Groups}, Theorem 2.3.15, 
        where we take the ``$(G, K, H)$’’ of \cite{ribes-profinite_Groups}, Theorem 2.3.15,  to be $(D_m, \Aut_{J/P^{p, i,s}}(J), {\Delta^{'}}_m)$, that this exact sequence splits. This completes the proof of the first portion of this corollary. 
    Since [it is immediate that] $\Ker(\Aut_{J/P}(J) \xrightarrow[]{} \Aut_{J/P}(J/P^{p, i,s})) =  \Aut_{J/P^{p, i,s}}(J)$, 
    the implication (iii) $\Rightarrow$ (iv) follows immediately from Theorem \ref{theorem:pro-pness_of_kernel}. 
    Finally, since [it is immediate that] $\Ker(\Aut_{J/P}(J) \xrightarrow[]{} \Aut_{J/P}(J/P^{p, i,s})) =  \Aut_{J/P^{p, i,s}}(J)$, the implication (iv) $\Rightarrow$ (iii) follows from the same argument as
    that used in the proof of the implication (ii) $\Rightarrow$ (i), with $\operatorname{Aut}^{P}(J)$ and $\operatorname{Aut}(J/P^{p, i,s})$ replaced by
    $\operatorname{Aut}_{J/P}(J)$ and
    $\operatorname{Aut}_{J/P}(J/P^{p, i,s})$, respectively.
\end{proof}

\section{Finite subgroups of the automorphism groups of groups of MLF-type}
\label{section2add}

In the present \S2, we prove the existence of nontrivial finite subgroups of the automorphism groups of the absolute Galois groups of certain mixed-characteristic local fields [cf.\ Theorem~\ref{theorem:existence_of_torsion_Aut(G)} below]. 

Let $k$ be a mixed-characteristic local field and $\overline{k}$ an algebraic closure of $k$. 
We shall write $G_k \defeq \Gal(\overline{k}/k)$. 
Let $G$ be a group of MLF-type.  
Thus, by applying the various functorial group-theoretic reconstruction algorithms of \cite{Hoshi-intro_mono_MLF}, \S 3 [cf. \cite{Hoshi-intro_mono_MLF}, Definition 3.5, (i), (ii), (iii); \cite{Hoshi-intro_mono_MLF}, Definition 3.10, (ii), (iv)], to the group $G$ of MLF-type, we obtain
\begin{itemize}
 \item a prime number $p(G)$,
 \item a positive integer $f(G)$, 
 \item a positive integer $d(G)$, 
 \item a topological group $k^{\times}(G)$, 
\item a topological group $\mathcal{O}^{\prec}(G)$, 
 \item a closed normal subgroup $I(G) \subseteq G$,   
 \item a closed normal subgroup $P(G) \subseteq G$, and
 \item a topological $G$-module $\Lambda(G)$
\end{itemize}
[cf.\ also \cite{Hoshi-intro_mono_MLF}, Summary 3.15]. 
Here, let us recall [cf.\ \cite{Hoshi-intro_mono_MLF}, Proposition 3.6; \cite{Hoshi-intro_mono_MLF}, Proposition 3.11; \cite{Hoshi-intro_mono_MLF}, Proposition 4.2] 
that 

\begin{quote}
     $p(G_k)$, $f(G_k)$, $d(G_k)$, $k^{\times}(G_k)$, $\mathcal{O}^{\prec}(G_k)$, $I(G_k)$, $P(G_k)$, $\Lambda(G_k)$ coincide with
     $p_k$, $f_k$, $d_k$, the multiplicative group $k^\times$ of $k$, the group of principal units of $k$, the inertia subgroup of $G_k$, the wild inertia subgroup of $G_k$, and 
     the cyclotome $\Lambda(\overline{k})$,  
     respectively. 
\end{quote}
Moreover, let $\mathfrak{S}_n$ be the symmetric group on $n$ letters for a positive integer $n$. 

\begin{lem}\label{lemma:torsionfreeness_of_G}
    The following hold:
    \begin{itemize}
    \item[(i)] The closed normal subgroup $P(G) \subseteq G$ is a free pro-$p(G)$ group. In particular, $P(G)$ is torsion-free.
    \item[(ii)] The quotient $I(G)/P(G)$ is isomorphic to $\prod_{l \neq p(G)} \mathbb{Z}_l$.
    In particular, since each $\mathbb{Z}_l$ is torsion-free, the group $I(G)/P(G)$ is torsion-free.
    \item[(iii)] The quotient $G/I(G)$ is isomorphic to  $\widehat{\mathbb{Z}}$.
    In particular, the group $G/I(G)$ is torsion-free. 
    \item[(iv)] The group $G$ is torsion-free.
    \end{itemize}
\end{lem}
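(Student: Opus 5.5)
Since $G$ is of MLF-type, we may fix an isomorphism $G \simeq G_k$ for some mixed-characteristic local field $k$. The reconstruction algorithms are functorial, so this isomorphism carries $P(G)$, $I(G)$ and $p(G)$ to $P(G_k)$, $I(G_k)$ and $p(G_k)$. By the recalled results of \cite{Hoshi-intro_mono_MLF}, these are the wild inertia subgroup, the inertia subgroup and the residue characteristic $p_k$. It therefore suffices to prove (i)--(iv) for $G = G_k$ with $p = p_k$, and we argue in that setting.

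For (i), we use the classical structure theory of the wild inertia group. Write $k^{\rm tr}$ for the maximal tamely ramified extension of $k$, so that $P(G_k) = \Gal(\overline{k}/k^{\rm tr})$. The group $P(G_k)$ is pro-$p$, being the Sylow pro-$p$ subgroup of $I(G_k)$. The field $k^{\rm tr}$ contains the maximal unramified extension of $k$, so its residue field is algebraically closed, and it has cohomological dimension at most $1$ (Serre, Lang's theorem). Hence $H^2(P(G_k), \mathbb{F}_p) = 0$, so $P(G_k)$ is a free pro-$p$ group; in fact it is free of countably infinite rank. A free pro-$p$ group has cohomological dimension $\le 1$. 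A nontrivial finite cyclic subgroup $\mathbb{Z}/p$ would have infinite cohomological dimension, contradicting the fact that closed subgroups do not increase $\mathrm{cd}$. Therefore $P(G_k)$ is torsion-free. Alternatively, any torsion element of a pro-$p$ group has $p$-power order, so one only needs to exclude elements of order $p$.

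For (ii) and (iii), we use the standard descriptions
\[
I(G_k)/P(G_k) \simeq \Gal(k^{\rm tr}/k^{\rm ur}) \simeq \varprojlim_{(n,p)=1} \mu_n \simeq \prod_{l \neq p} \mathbb{Z}_l,
\qquad
G_k/I(G_k) \simeq \Gal(k^{\rm ur}/k) \simeq \widehat{\mathbb{Z}}.
\]
The first comes from Kummer theory for tame extensions of $k^{\rm ur}$: these are generated by $n$-th roots of a uniformizer with $p \nmid n$. The second holds because the residue field of $k$ is finite. Products of the groups $\mathbb{Z}_l$ are torsion-free, since each $\mathbb{Z}_l$ is.

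For (iv), the extensions $1 \to P \to I \to I/P \to 1$ and $1 \to I \to G \to G/I \to 1$ each have torsion-free kernel and torsion-free quotient, and torsion-freeness is preserved under such extensions. Indeed, let $g$ have finite order $n$. Its image in the torsion-free quotient has finite order, hence is trivial, so $g$ lies in the kernel. Then $g$ is a torsion element of a torsion-free group, so $g = 1$. Applying this twice gives (iv).

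The only non-formal step is the freeness of $P(G_k)$ in (i), which rests on the cohomological-dimension computation for $k^{\rm tr}$. I would cite it from Neukirch--Schmidt--Wingberg or Serre's \emph{Galois Cohomology} rather than reprove it.
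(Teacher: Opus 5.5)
Your proposal is correct and follows essentially the same route as the paper: you transport everything to $G_k$ via the functorial reconstruction of $p(G)$, $I(G)$, $P(G)$ (\cite{Hoshi-intro_mono_MLF}, Proposition 3.6), and you obtain (i)--(iii) from the classical structure of the wild inertia, tame inertia and unramified quotients, which the paper simply cites from \cite{Hoshi-intro_mono_MLF}, Lemma 1.5, whereas you sketch the cohomological-dimension and tame Kummer-theory arguments behind it; you then derive (iv) from the two exact sequences exactly as the paper does. One small point of phrasing: Lang's theorem gives $\mathrm{cd}(k^{\rm ur}) \le 1$ (via its completion), and $\mathrm{cd}(k^{\rm tr}) \le 1$ follows because $G_{k^{\rm tr}}$ is a closed subgroup of $G_{k^{\rm ur}}$, not directly from the residue field of the non-discretely-valued field $k^{\rm tr}$ being algebraically closed.
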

\begin{proof}
    Assertions (i), (ii), and (iii) follow immediately from \cite{Hoshi-intro_mono_MLF}, Proposition 3.6 and \cite{Hoshi-intro_mono_MLF}, Lemma 1.5, (i), (ii). 
    By considering the natural exact sequences
    \[
    1 \longrightarrow I(G) \longrightarrow G \longrightarrow G/I(G) \longrightarrow 1
    \]
    and
    \[
    1 \longrightarrow P(G) \longrightarrow I(G) \longrightarrow I(G)/P(G) \longrightarrow 1,
    \]
    assertion (iv) follows immediately from assertions (i), (ii), and (iii).
\end{proof}

\begin{definition}\label{definition:a_and_s}

\ \ 
    \begin{itemize}
        \item[(i)] We shall write $a(G) \defeq \log_{p(G)}\left(\sharp \left(k^\times(G)[p(G)^\infty]  \right)  \right)$. 
        \item[(ii)] We define $s(G)$ to be the largest integer $n \geq 1$ such that $P(G)$ acts trivially on
        \[
            \Lambda(G)/p(G)^n\Lambda(G).
        \]
    \end{itemize}
    
\end{definition}
     
\begin{lem}\label{lemma:s(G_k)}
    Suppose that $p_k$ is odd. Then the following hold:
    \begin{itemize}
        \item[(i)] The integer $a(G_k)$ is equal to the largest integer $a_k$ such that $k$ contains a primitive $p_k^{a_k}$-th root of unity.
        \item[(ii)] The integer $s(G_k)$ is equal to the largest integer $s_k$ such that the maximal tamely ramified extension of $k$ contains a primitive $p_k^{s_k}$-th root of unity.
    \end{itemize}
\end{lem}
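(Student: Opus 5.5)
For (i), I would unwind the definition of $a(G_k)$ using the reconstruction results recalled above. By \cite{Hoshi-intro_mono_MLF}, Proposition 3.11, the topological group $k^\times(G_k)$ is naturally isomorphic to $k^\times$. Hence $k^\times(G_k)[p_k^\infty]$ is identified with $k^\times[p_k^\infty] = \mu(k)[p_k^\infty]$. Since $\mu(k)$ is a finite cyclic group (as is standard for local fields), its $p_k$-primary part is cyclic of order $p_k^{a_k}$, where $a_k$ is the largest integer such that $k$ contains a primitive $p_k^{a_k}$-th root of unity. Taking $\log_{p_k}$ then gives $a(G_k)=a_k$. There is essentially nothing to check here beyond the compatibility of the identification $k^\times(G_k)\simeq k^\times$ with group structures.

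For (ii), I would identify $P(G_k)$ with the wild inertia subgroup $P_k$ and $\Lambda(G_k)$ with the cyclotome $\Lambda(\overline{k})$, compatibly with the $G_k$-actions. Then $\Lambda(G_k)/p_k^n\Lambda(G_k)\simeq \mu(\overline{k})[p_k^n]$ as $G_k$-modules. Thus $P_k$ acts trivially on this quotient if and only if $\mu(\overline{k})[p_k^n]\subseteq \overline{k}^{P_k}$. By definition of the wild inertia subgroup, $\overline{k}^{P_k}$ is the maximal tamely ramified extension $k^{\mathrm{tm}}$ of $k$. Hence the condition is exactly that $k^{\mathrm{tm}}$ contains a primitive $p_k^n$-th root of unity, and $s(G_k)$, as the largest such $n$, equals $s_k$.

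The point needing care is that $s(G)$ is well defined, i.e. that the set of such $n\ge1$ is nonempty and bounded. Nonemptiness follows because $k(\mu_{p_k})/k$ has degree dividing $p_k-1$, which is prime to $p_k$, so it is tamely ramified and $n=1$ works. Boundedness uses the hypothesis that $p_k$ is odd. The extension $k^{\mathrm{tm}}(\mu_{p_k^\infty})/k^{\mathrm{tm}}$ is infinite, since $\Qp(\mu_{p_k^\infty})/\Qp$ is infinitely wildly ramified. More concretely, the image of $G_{k^{\mathrm{tm}}}=P_k$ in $\bZ_p^\times$ under the cyclotomic character is a pro-$p$ group. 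It is open in the image of $G_k$, which is open in $\bZ_p^\times$, so it is nontrivial. For $p$ odd, a nontrivial closed subgroup of $1+p\bZ_p$ equals $1+p^m\bZ_p$ for some $m\ge1$. Therefore the fixed field contains exactly the $p^m$-th roots of unity and no more, so $s_k=m$ is finite.

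The main obstacle is exactly this last step, showing that $s_k$ is finite and that the maximal $n$ is attained.
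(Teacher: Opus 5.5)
Your proposal is correct and is essentially the paper's argument: both parts come from unwinding Definition~\ref{definition:a_and_s} through Hoshi's identifications of $k^{\times}(G_k)$, $P(G_k)$ and $\Lambda(G_k)$ with $k^{\times}$, the wild inertia subgroup and the cyclotome, respectively, together with $\overline{k}^{P(G_k)}=k^{\mathrm{tm}}$. You also check that $s_k$ is finite and attained, which the paper leaves implicit, and that check is sound. Two small remarks on it: your ramification argument already gives $\chi(P(G_k))\neq 1$, so the unjustified ``open in the image of $G_k$'' step is not needed; and oddness of $p_k$ is not actually required for boundedness, only for your clean description of the closed subgroups of $1+p\bZ_p$.
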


\begin{proof}
    Assertion (i) follows immediately from Definition \ref{definition:a_and_s}, (i), \cite{Hoshi-intro_mono_MLF}, Proposition 3.6, and \cite{Hoshi-topics_MLF}, Proposition 2.5, (i). 
    Assertion (ii) follows immediately from Definition \ref{definition:a_and_s}, (ii), \cite{Hoshi-intro_mono_MLF}, Proposition 3.6, \cite{Hoshi-intro_mono_MLF}, Proposition 4.2, (iv), and the various definitions involved. 
\end{proof}

\begin{prop}[Jannsen-Wingberg]\label{proposition:gen_and_relation_G_k} 
Suppose that $d(G) \geq 3$ and that $p(G)$ is odd. 
Write $i(G) \defeq \frac{5+(-1)^{d(G)}}{2}$. 
Then there exist 
\begin{itemize}
 \item
topological generators 
$\sigma$, $\tau$, $x_0, \dots, x_{d(G)}$ of $G$ and  
 \item
positive integers $g$, $h$  

\end{itemize}
that satisfy the following four conditions:  
\begin{enumerate}
 \item[(0)] It holds that
    \[
    g\left(
    h^{p(G)-1}+h^{p(G)-2}+\cdots+h
    \right)
    \neq p(G)-1.
    \]

 \item[\rm (1)] The closed normal subgroup $P(G)$ of $G$ is pro-$p(G)$ and topologically normally generated by $x_0, \dots, x_{d(G)}$.
 \item[\rm (2)]\label{tamerelation} The equality $\sigma \tau \sigma^{-1} = \tau^{q(G)}$ holds, where we write $q(G) \defeq p(G)^{f(G)}$.
 
 \item[\rm (3)]\label{wildrelation} 
 There exists a profinite word $\delta'(S,T,X_1,\ldots,X_{i(G)-1})$ in the variables $S,T,X_1,\ldots,X_{i(G)-1}$ such that if one writes 
 $$
    \delta(S,T,X_1,\ldots,X_{d(G)}) \defeq \delta'(S, T, X_1, \cdots, X_{i(G)-1}) \left[X_{i(G)}, X_{i(G)+1}\right] \cdots \left[X_{d(G)-1}, X_{d(G)}\right], 
$$
then the following two conditions are satisfied: 
\begin{itemize}
    \item[(a)] The equality 
        \dd{
        \sigma x_0 \sigma^{-1}
        =
        (x^{h^{{p(G)}-1}}_0 \tau x^{h^{{p(G)}-2}}_0 \tau \cdots x^{h}_0 \tau)^{\frac{\pi g}{{p(G)}-1}}
        x_1^{p(G)^{s(G)}}
        \delta(\sigma, \tau, x_1, \cdots, x_{d(G)}) 
        }
        holds, where we write $\pi$ for the unique element of $\hat{\bZ} = \prod_l \bZ_l$ whose image in $\bZ_l$ is given by $1$ if $l = p(G)$ $($resp.\ by $0$ if $l \neq p(G)$$)$. 
    \item[(b)] For arbitrary elements
        $u_1,\ldots,u_{i(G)-1}\in P(G)$, the element 
        \[
          \delta'(\sigma,\tau,u_1,\ldots,u_{i(G)-1})
        \]
        belongs to the commutator subgroup of $G$. 
\end{itemize}

\end{enumerate}
Moreover, the profinite group $G$ is isomorphic to the profinite group generated by $d(G) + 3$ generators as above, subject to the above conditions. 

\end{prop}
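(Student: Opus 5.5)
This statement is essentially the Jannsen--Wingberg presentation of $G_k$ for $p$ odd, rewritten in group-theoretic terms. The plan is therefore to reduce to the case $G = G_k$ and then translate the classical theorem (cf. Jannsen--Wingberg, and Neukirch--Schmidt--Wingberg, \emph{Cohomology of Number Fields}, Theorem 7.5.14) into the invariants $p(G)$, $f(G)$, $d(G)$, $s(G)$, $P(G)$.

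\textbf{Step 1: reduce to $G = G_k$.} Since $G$ is of MLF-type, fix an isomorphism $G \simeq G_k$ for some mixed-characteristic local field $k$. The reconstruction algorithms of \cite{Hoshi-intro_mono_MLF} are group-theoretic and functorial. Hence the isomorphism carries $p(G), f(G), d(G), P(G), \Lambda(G), k^\times(G)$ to $p_k, f_k, d_k$, the wild inertia subgroup, the cyclotome and $k^\times$, as recalled just before Lemma \ref{lemma:torsionfreeness_of_G}. By Lemma \ref{lemma:s(G_k)}, (ii), $s(G)$ equals $s_k$, the largest $s$ such that the maximal tamely ramified extension $k^{\rm tr}$ contains $\mu_{p^{s}}$. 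This is exactly the integer appearing in the Jannsen--Wingberg relation. So it suffices to prove the statement for $G_k$ with $d_k \ge 3$ and $p_k$ odd.

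\textbf{Step 2: invoke the Jannsen--Wingberg theorem.} For $p \ne 2$, this theorem gives generators $\sigma, \tau, x_0, \dots, x_d$ of $G_k$ with the following properties:
\begin{itemize}
\item $\sigma$ lifts Frobenius and $\tau$ generates tame inertia modulo wild inertia.
\item $x_0, \dots, x_d$ normally generate the wild inertia group $P(G_k)$, which is pro-$p$ by Lemma \ref{lemma:torsionfreeness_of_G}, (i). This gives (1).
\item The tame relation $\sigma\tau\sigma^{-1} = \tau^{q}$ holds, giving (2).
\item The single wild relation $\sigma x_0 \sigma^{-1} = x_0^{\beta}\, x_1^{p^{s}} [x_1, y_1] \cdots$ holds, with an extra term when $d$ is odd.
\end{itemize}
Here $x_0^{\beta}$ is shorthand for the twisted power $(x_0^{h^{p-1}}\tau x_0^{h^{p-2}}\tau\cdots x_0^{h}\tau)^{\pi g/(p-1)}$. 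In this power, $g$ and $h$ are integers chosen so that $\langle g\rangle$ and $\langle h\rangle$ reproduce the action of $\langle\sigma,\tau\rangle$ on $\mu_{p^{s}}$ through the cyclotomic character of $k^{\rm tr}$. Condition (0) is the non-degeneracy of this choice, which comes from the normalization in the theorem. After relabelling $y_j$ as $x_{2j}$, the commutator tail becomes $[X_{i(G)},X_{i(G)+1}]\cdots[X_{d-1},X_d]$. The parity split $i(G) \in \{2,3\}$ matches the even/odd cases of the theorem: for $d$ odd there is one extra generator, coupled via a term $[x_1,\tau]$ or similar, and this term is collected into $\delta'$. Condition (3)(b) then holds because $\delta'$ is a product of commutators whenever $u_j \in P(G)$. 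The final clause, that $G$ is presented by these generators and relations, is the ``moreover'' part of the cited theorem.

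\textbf{Main obstacle.} I expect the hardest part to be matching normalizations. The classical statement phrases the relation with $\pi$-adic exponents and $(p-1)$-divisions, and with $g, h$ defined via the characters of $\mu_{p^{s}}$. I will verify carefully that the literature's $s$ is the $s_k$ of Lemma \ref{lemma:s(G_k)}. I will also check that $\delta'$ as given there depends only on the first $i(G)-1$ wild generators and lands in the commutator subgroup, and I will derive (0) from the literature's conditions on $g, h$. If the cited formulation differs, I would instead reduce by comparing the abelianizations, using local class field theory to compute $G^{\rm ab}$ and $k^\times[p^\infty]$. This would confirm that the exponent $p^{s}$ and the commutator structure are forced.
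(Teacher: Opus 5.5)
Your proposal follows the paper's proof. The paper likewise transports the statement to $G_k$ via \cite{Hoshi-intro_mono_MLF}, Proposition 3.6, identifies $s(G)$ with the integer $s_k$ attached to the maximal tamely ramified extension via Lemma~\ref{lemma:s(G_k)}, (ii), and reads off the generators, relations and presentation from the Jannsen--Wingberg theorem \cite{NSW-cohomology_of_nf}, Theorem 7.5.14. Your fallback through abelianizations would not work, since $G^{\rm ab}$ only detects $\mu(k)[p^\infty]$ (i.e.\ $a(G)$, which can be strictly smaller than $s(G)$) and cannot see the commutator part of the wild relation, but the main line does not need it.
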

\begin{proof}
This assertion follows from \cite{NSW-cohomology_of_nf}, Theorem 7.5.14, together with Lemma~\ref{lemma:s(G_k)}, (ii), and \cite{Hoshi-intro_mono_MLF}, Proposition 3.6. 
\end{proof}

\begin{lem}\label{lemma:linear_independent_F_p}
    Suppose that $d(G) \geq 3$ and that $p(G)$ is odd. 
    Let $\sigma, \tau, x_0, \cdots, x_{d(G)}$ be topological generators of $G$ as in Proposition \ref{proposition:gen_and_relation_G_k}. 
    For each integer $r$ such that $0 \leq r \leq d(G)$, write $\overline{x}_r$ for the image of $x_r$ in $P(G)/P(G)^{p(G),2,1}$. Then the family $\{\overline{x}_r\}_{2 \leq r \leq d(G)}$ is $\mathbb{F}_{p(G)}$-linearly independent.
\end{lem}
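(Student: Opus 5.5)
The plan is to detect the classes $\overline{x}_2,\ldots,\overline{x}_{d(G)}$ by a continuous homomorphism out of $P(G)$ into an elementary abelian $p(G)$-group. Such a homomorphism automatically factors through $P(G)/P(G)^{p(G),2,1}=P(G)/\Phi(P(G))$. Write $p\defeq p(G)$ and $d\defeq d(G)$. The homomorphism I would use is the restriction to $P(G)$ of the natural surjection $G\twoheadrightarrow G^{\rm ab}/pG^{\rm ab}$. It therefore suffices to show that the images of $x_2,\ldots,x_d$ in the $\mathbb{F}_p$-vector space $G^{\rm ab}/pG^{\rm ab}$ are $\mathbb{F}_p$-linearly independent. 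Indeed, any nontrivial $\mathbb{F}_p$-linear relation among the $\overline{x}_r$ would map to the same relation among these images.

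To compute $G^{\rm ab}/pG^{\rm ab}$, I would use the final assertion of Proposition \ref{proposition:gen_and_relation_G_k}: $G$ is the profinite group on $\sigma,\tau,x_0,\ldots,x_d$ subject to relations (2) and (3)(a). Consequently $G^{\rm ab}/pG^{\rm ab}$ is the $\mathbb{F}_p$-vector space on the classes of these $d+3$ generators, modulo the images of the abelianized relations.

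Relation (2) becomes $(q(G)-1)\bar\tau=0$. Since $q(G)-1$ is prime to $p$, it simply forces $\bar\tau=0$ modulo $p$.

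Relation (3)(a) is the main point. The conjugate $\sigma x_0\sigma^{-1}$ abelianizes to $\bar x_0$. The first factor on the right contributes a combination of $\bar x_0$ and $\bar\tau$, and $x_1^{p^{s(G)}}$ dies modulo $p$. Each commutator $[x_j,x_{j+1}]$ for $j\ge i(G)$ lies in the commutator subgroup, so it vanishes. The only delicate factor is $\delta'(\sigma,\tau,x_1,\ldots,x_{i(G)-1})$, since it may involve $x_2$ when $d$ is even. Here I would invoke condition (3)(b) with $u_j=x_j$. This is legitimate because each $x_j\in P(G)$ by condition (1). Condition (3)(b) then says that $\delta'(\sigma,\tau,x_1,\ldots,x_{i(G)-1})$ lies in the commutator subgroup of $G$, so its image in $G^{\rm ab}$ is trivial.

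Hence every relation of $G^{\rm ab}/pG^{\rm ab}$ is an $\mathbb{F}_p$-linear combination involving only $\bar\sigma,\bar\tau,\bar x_0,\bar x_1$. It follows that $G^{\rm ab}/pG^{\rm ab}$ splits as a direct sum in which the span of $\bar x_2,\ldots,\bar x_d$ is a free $\mathbb{F}_p$-summand of dimension $d-1$. This gives the asserted linear independence.

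I expect the main obstacle to be justifying that "abelianization of a presented group equals the free abelian group modulo abelianized relators" in the profinite setting. One must make sure that the relation (3)(a), which involves the profinite word $\delta'$ and condition (b), can be treated this way. I would handle it by constructing explicitly a continuous homomorphism from the free profinite group on the generators to $\mathbb{F}_p^{\{2,\ldots,d\}}$: it sends $x_r\mapsto e_r$ for $2\le r\le d$ and all other generators to $0$. This map kills the relators (2) and (3)(a) by the computation above, where $\delta'$ is handled through (b). It therefore descends to $G$ and separates $x_2,\ldots,x_d$. As a sanity check, the count agrees with local class field theory. The image of $P(G)$ in $G^{\rm ab}$ is the group of principal units, which is $\mathbb{Z}_p^{d}\times\mu_{p^{a(G)}}$, so dimension $d-1$ is certainly available.
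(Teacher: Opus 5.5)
Your route differs from the paper's, which maps $P(G)$ to $\mathcal{O}^{\prec}(G)^{\rm ab/tor}\otimes\mathbb{F}_p$, imports generation and $\mathbb{Q}_p$-independence facts from Hoshi--Nishio, and counts dimensions. Most of your argument is correct:
the reduction to an elementary abelian target;
the treatment of (2), of $\langle x_0,\tau\rangle^{g}$, of $x_1^{p^{s}}$ and of the $[x_j,x_{j+1}]$;
and the pro-$p$ condition (1), which is automatic because your target is a $p$-group.
For $d$ odd your map $\phi$ works as written, since then $\delta'=\delta'(S,T,X_1)$ and $\phi(\sigma)=\phi(\tau)=\phi(x_1)=0$.

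For $d$ even, however, your handling of $\delta'(\sigma,\tau,x_1,x_2)$ is circular. To descend $\phi$ from the free profinite group $F$, you must show that $\phi$ kills the relator \emph{in $F$}. There $\phi(\delta'(\sigma,\tau,x_1,x_2))=n_2e_2$, where $n_2\in\widehat{\mathbb{Z}}$ is the exponent sum of $X_2$ in the word $\delta'$. Condition (3)(b) with $u_j=x_j$ only says that the image of this element \emph{in $G$} lies in $[G,G]$. Turning that into $\phi(\delta')=0$ presupposes that $\phi$ already factors through $G$. Your first formulation has the same defect: the relations of $G^{\rm ab}/pG^{\rm ab}$ are the images of the relators in $F^{\rm ab}/pF^{\rm ab}$, not the relations as they hold in $G^{\rm ab}$.

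This is not a formality. Modulo $p$ the relator (3)(a) becomes $(1-c)\bar x_0-g\bar\tau-v$, where $c=g(h^{p-1}+\cdots+h)/(p-1)$ and $v$ is the image of $\delta'$. When $c\equiv 1 \pmod p$ (as happens when $\mu_p\subset k$), the statement that $v$ dies in $G^{\rm ab}/pG^{\rm ab}$ holds for \emph{every} $v$. That includes $v=\bar x_2$, which would kill $\bar x_2$.

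The missing input is the fact you used only as a sanity check. Apply (3)(b) with arbitrary arguments. Let $n_S,n_T,n_1,n_2\in\widehat{\mathbb{Z}}$ be the exponent sums of $\delta'$, and write $\bar u$ for the image of $u\in P(G)$ in $G^{\rm ab}$. Taking $u_1=u_2=1$, and then $u_1=1$, $u_2=u$, yields $n_2\cdot\bar u=0$ in $G^{\rm ab}$ for all $u\in P(G)$. The image of $P(G)$ in $G^{\rm ab}$ is the group of principal units of $k$, which has $\mathbb{Z}_p$-rank $d\geq 1$. Hence the $\mathbb{Z}_p$-component of $n_2$ vanishes, so $\phi(\delta'(\sigma,\tau,x_1,x_2))=0$ and $\phi$ descends.

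Alternatively, keep $u_j=x_j$ but argue integrally in $F^{\rm ab}$. Then (3)(b) says that the image of $\delta'$ lies in the $\widehat{\mathbb{Z}}$-span of the abelianized relators, up to the contribution of (1), which only affects prime-to-$p$ components of the $\bar x_j$. Compare the $\mathbb{Z}_p$-coordinates at $\bar x_0$ and $\bar x_2$, using $c\neq 1$ (this is exactly condition (0)). This forces the $\mathbb{Z}_p$-component of $n_2$ to vanish. With either addition your proof becomes a valid, more self-contained alternative to the paper's.
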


\begin{proof}
    For each $i \in \{0,\dots,d(G)\}$, write
    $z_i \in \mathcal{O}^{\prec}(G)^{\mathrm{ab/tor}}$ for the image of
    $x_i \in P(G)$ [cf. \cite{Hoshi-intro_mono_MLF}, Definition 3.10, (i), (ii)].
    Then it follows immediately from the proof of
    \cite{Hoshi-Nishio-outer_MLF}, Lemma 1.3, that
    $z_0,z_1,\ldots,z_{d(G)}$ generate
    $\mathcal{O}^{\prec}(G)^{\mathrm{ab/tor}}$ as a
    $\mathbb{Z}_{p(G)}$-module. 
    Here, it follows immediately from
    \cite{Hoshi-intro_mono_MLF}, Lemma 1.2, (i), (ii); 
    \cite{Hoshi-intro_mono_MLF}, Proposition 3.11, (i), that
    $\mathcal{O}^{\prec}(G)^{\mathrm{ab/tor}}$ is a free
    $\mathbb{Z}_{p(G)}$-module of rank $d(G)$. 
    Thus, if there exists a nontrivial
    $\mathbb{Z}_{p(G)}$-linear relation among
    $z_2,\ldots,z_{d(G)}$, then, by tensoring with
    $\mathbb{Q}_{p(G)}$, one would obtain a nontrivial
    $\mathbb{Q}_{p(G)}$-linear relation among
    $
    z_2\otimes 1,\ldots,z_{d(G)}\otimes 1,
    $
    which would contradict \cite{Hoshi-Nishio-outer_MLF}, Lemma 1.3. 
    Moreover, it follows from the proof of \cite{Hoshi-Nishio-outer_MLF}, Lemma 1.3, that $z_0\otimes 1$ and $z_1\otimes 1$ are $\mathbb{Q}_{p(G)}$-linearly
    dependent.  Hence the $\mathbb{Z}_{p(G)}$-submodule generated by
    $z_0$ and $z_1$ has rank at most one, so its image in
    $\mathcal{O}^{\prec}(G)^{\mathrm{ab/tor}}
    \otimes_{\mathbb{Z}_{p(G)}}\mathbb{F}_{p(G)}$
    has dimension at most one.  Since the images of
    $z_0,\ldots,z_{d(G)}$ generate this $d(G)$-dimensional
    $\mathbb{F}_{p(G)}$-vector space, the images of
    $z_2,\ldots,z_{d(G)}$ span a subspace of dimension at least
    $d(G)-1$.
    Therefore, the images of
    $x_2,\ldots,x_{d(G)}$ in
    $\mathcal{O}^{\prec}(G)^{\mathrm{ab}/\mathrm{tor}}
    \otimes_{\mathbb{Z}_{p(G)}} \mathbb{F}_{p(G)}$
    are $\mathbb{F}_{p(G)}$-linearly
    independent. 
    Since
    $\mathcal{O}^{\prec}(G)^{\mathrm{ab}/\mathrm{tor}}
    \otimes_{\mathbb{Z}_{p(G)}} \mathbb{F}_{p(G)}$
    is abelian and annihilated by $p(G)$, the natural surjection
    $P(G)\twoheadrightarrow
    \mathcal{O}^{\prec}(G)^{\mathrm{ab}/\mathrm{tor}}
    \otimes_{\mathbb{Z}_{p(G)}} \mathbb{F}_{p(G)}$
    factors through $P(G)/P(G)^{p(G),2,1}$. 
    Thus, the images of $x_2,\ldots,x_{d(G)}$ in
    $P(G)/P(G)^{p(G),2,1}$ 
    map to the $\mathbb{F}_{p(G)}$-linearly independent images of $z_2,\ldots,z_{d(G)}$ in
    $\mathcal{O}^{\prec}(G)^{\mathrm{ab}/\mathrm{tor}}
    \otimes_{\mathbb{Z}_{p(G)}} \mathbb{F}_{p(G)}$.
    In particular, the images of $x_2,\ldots,x_{d(G)}$ in
    $P(G)/P(G)^{p(G),2,1}$ are $\mathbb{F}_{p(G)}$-linearly
    independent. This completes the proof of Lemma \ref{lemma:linear_independent_F_p}.
\end{proof}

\begin{prop}\label{proposition:lift_of_Aut(G/P^3)_add} 
        Suppose that $p(G)$ is odd. Then the natural homomorphism 
        $$
        \Aut_{G/P(G)}\left(G\right)  \xrightarrow[]{} 
        \Im \left(\Aut_{G/P(G)}\left(G/P(G)^{p(G),3,s(G)}\right) \xrightarrow[]{} \Aut_{G/P(G)}\left(G/P(G)^{p(G),2,s(G)}\right) \right)
        $$
        is surjective. 
\end{prop}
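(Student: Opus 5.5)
The plan is to use the Jannsen--Wingberg presentation of Proposition~\ref{proposition:gen_and_relation_G_k} to write down explicit automorphisms of $G$. Throughout write $p \defeq p(G)$, $s \defeq s(G)$ and $P^{n} \defeq P(G)^{p,n,s}$. Let $\bar\varphi \in \Aut_{G/P(G)}(G/P^{2})$ be the image of some $\varphi_3 \in \Aut_{G/P(G)}(G/P^{3})$. I will construct $\Phi \in \Aut_{G/P(G)}(G)$ inducing $\bar\varphi$ as a limit of successive approximations. The key tool is the ``moreover'' part of Proposition~\ref{proposition:gen_and_relation_G_k}. It implies that an assignment $\sigma \mapsto \sigma'$, $\tau\mapsto\tau'$, $x_r\mapsto x_r'$ extends to a continuous endomorphism of $G$ as soon as the primed elements satisfy relations (2) and (3)(a).

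\textbf{Step 1 (the tame part).} Since $\bar\varphi$ is the identity on $G/P(G)$, I first compose $\varphi_3$ with a suitable inner automorphism by an element of $P(G)$ modulo $P^3$. I will try to arrange that $\varphi_3$ fixes the images of $\sigma$ and $\tau$. If this normalization is not available in general, I will instead lift the images of $\sigma,\tau$ to elements $\sigma'\in\sigma P(G)$, $\tau'\in\tau P(G)$ with $\sigma'\tau'\sigma'^{-1}=\tau'^{q(G)}$. For this I would use that the tame quotient $G/P(G)$ is projective in the appropriate sense relative to the pro-$p$ kernel $P(G)$, so that it lifts into $G$ compatibly. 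In either case the tame relation (2) is then satisfied. I take $x_0',x_2',\dots,x_{d(G)}'$ to be arbitrary lifts to $P(G)$ of the images of $x_0,x_2,\dots,x_{d(G)}$ under $\varphi_3$.

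\textbf{Step 2 (the wild relation, the main obstacle).} It remains to choose $x_1'$ so that (3)(a) holds exactly. Relation (3)(a) has the form $x_1^{p^s}=R(\sigma,\tau,x_0,x_1,\dots,x_{d(G)})$. Here $R$ is the product of $\sigma x_0\sigma^{-1}$, the inverse of $\delta$, and the inverse of the tame-power term. By condition (3)(b) and the shape of $\delta$, the element $R$ depends on $x_1$ only through terms that lie one step deeper in the $p^s$-central filtration. Since $\varphi_3$ is an automorphism of $G/P^{3}$, the primed relation already holds modulo $P^{3}$. So the element $R(\sigma',\tau',x_0',x_1',\dots)$ is a $p^s$-th power modulo $P^{3}$, for an initial lift $x_1'$.

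I would then run an induction on $n\ge 3$. Suppose the relation holds modulo $P^{n}$. I correct $x_1'$ by an element of $P^{n-1}$ and, if needed, correct $x_0',x_2',\dots$ by elements of $P^{n-1}$ as well. The aim is to kill the error in $P^{n}/P^{n+1}$. Here I would use two facts. First, on the graded pieces the map $y\mapsto y^{p^s}$ sends $P^{n-1}/P^{n}$ into $P^{n}/P^{n+1}$. Second, the error term is controlled by the linear independence of the $\overline{x}_r$ for $r\ge 2$ (Lemma~\ref{lemma:linear_independent_F_p}) and the freeness of $P(G)$ (Lemma~\ref{lemma:torsionfreeness_of_G}, (i)). This obstruction-vanishing at each graded step is where I expect the real difficulty. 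If the $p^s$-th power map alone does not hit the error, I would use the commutator terms $[x_{i(G)},x_{i(G)+1}]\cdots$ in $\delta$ to absorb it. By compactness, the corrected tuples converge as $n\to\infty$. This gives elements satisfying all relations exactly, hence an endomorphism $\Phi$ of $G$ inducing the identity on $G/P(G)$ and inducing $\bar\varphi$ modulo $P^{2}$, because every correction was made inside $P^{2}$.

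\textbf{Step 3 (bijectivity).} $\Phi$ is the identity on $G/P(G)$, and modulo the Frattini subgroup of $P(G)$ it agrees with an automorphism. Its image is a closed subgroup $H$ with $HP(G)=G$ whose intersection with $P(G)$ topologically normally generates $P(G)$ modulo $\Phi(P(G))$. From this I expect $H=G$, via a Frattini-type argument for pro-$p$ normal subgroups. A surjective endomorphism of the topologically finitely generated profinite group $G$ is an automorphism, by the Hopfian property. Hence $\Phi\in\Aut_{G/P(G)}(G)$ maps to $\bar\varphi$, which proves the surjectivity.
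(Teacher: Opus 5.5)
There is a genuine gap, and it is in Step~2. Steps~1 and~3 are essentially sound. In Step~1 the inner-automorphism normalization is not available in general, but your fallback works: $G/P(G)$ has $p$-cohomological dimension $1$ and $G\to G/P^{3}$ has pro-$p$ kernel, so the section $G/P(G)\to G/P^{3}$ given by $\varphi_3(\bar\sigma),\varphi_3(\bar\tau)$ lifts to $G$. In Step~3, the image of $P(G)$ is a closed subgroup of $P(G)$ surjecting onto $P(G)/P(G)^{p,2,1}$, hence equals $P(G)$. Step~2, however, is the entire content of the proposition. The paper does not prove it by hand. It quotes Wingberg's uniqueness theorem for Demu\v{s}kin formations (\cite{Wingberg_dereindeutig}, Satz~2; cf.\ \cite{NSW-cohomology_of_nf}, Theorem~7.5.15 and the remark following it). That theorem asserts precisely that an automorphism of $G/P^{2}$ which extends to $G/P^{3}$ lifts to $G$. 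You leave exactly this as ``where I expect the real difficulty'' and offer only a hope (``I would use the commutator terms \dots\ to absorb it'').

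The mechanism you sketch also fails as stated. The filtration $P^{n}$ is central for $P(G)$ but not for $G$: $\sigma$ and $\tau$ act nontrivially on $P^{n-1}/P^{n}$. Condition (3)(b) only places $\delta'(\sigma,\tau,u_1,\ldots)$ in $[G,G]$, not in $P^{2}$. In the Jannsen--Wingberg relation, $\delta'$ is built from commutators of $x_1$ (and of $x_2$ when $d(G)$ is even) with elements formed from the tame generators. So your claim that $R$ depends on $x_1$ only through deeper terms is unjustified, and this breaks the induction in two places:
\begin{enumerate}
\item[(a)] Replacing $x_1'$ by $x_1'y$ with $y\in P^{n-1}$ changes $\delta'$ by an element whose image in $P^{n-1}/P^{n}$ has the form $(1-\gamma)\bar y$ with $\gamma$ acting through $G/P(G)$. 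That is a change at level $n-1$, not $n$.
\item[(b)] Replacing $x_0'$ by $x_0'z$ with $z\in P^{n-1}$ changes $\sigma x_0'\sigma^{-1}$ and the tame-power term already modulo $P^{n}$.
\end{enumerate}
Thus your level-$(n-1)$ corrections destroy the approximation you already have, unless they are confined to suitable kernels. You would then still have to prove that the resulting twisted linearized map hits every possible error in $P^{n}/P^{n+1}$, for all $n\geq 3$. That map combines the $p^s$-power on the admissible part of $P^{n-1}/P^{n}$ with the $\mathbb{Z}_{p}[[G/P(G)]]$-linear contributions of the $x_0$-term, the $\delta'$-term and the commutators $[x_r,x_{r+1}]$. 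Lemma~\ref{lemma:linear_independent_F_p}, which only concerns $P(G)/P(G)^{p(G),2,1}$, and the freeness of $P(G)$ say nothing about its cokernel. Controlling that cokernel at every level, in analogy with Labute's key lemma for Demu\v{s}kin groups, is exactly what Wingberg's theorem supplies. Without it, Step~2 restates the problem rather than proving it.
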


\begin{proof}
This assertion follows from \cite{NSW-cohomology_of_nf}, Theorem 7.5.15, and \cite{Wingberg_dereindeutig}, Satz 2 [cf. also \cite{NSW-cohomology_of_nf}, Remark following Theorem 7.5.15], together with
\cite{Hoshi-intro_mono_MLF}, Proposition 3.6, and Lemma \ref{lemma:s(G_k)}, (ii). 
\end{proof}

\begin{theorem}\label{theorem:nontrivial_sub_Aut(G_3)}
    Suppose that $d(G) \geq 3$ and that $p(G)$ is odd. 
    Then there exists a subgroup of $\Aut_{G/P(G)}\left(G/P(G)^{p(G),3,s(G)}\right)$ that is isomorphic to $$\left(\bZ/2\bZ \right)^{\left\lfloor \frac{d(G)-1}{2}\right\rfloor} \rtimes \mathfrak{S}_{\left\lfloor \frac{d(G)-1}{2}\right\rfloor}.$$ 
    Moreover, this subgroup acts faithfully on the quotient $G/P(G)^{p(G),2,s(G)}$. 
\end{theorem}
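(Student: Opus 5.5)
We build the subgroup explicitly on the generators of Proposition~\ref{proposition:gen_and_relation_G_k}. Put $d \defeq d(G)$, $p \defeq p(G)$, $s \defeq s(G)$, $i \defeq i(G)$, and $m \defeq \lfloor \frac{d-1}{2}\rfloor$. Note that $d - i + 1 = 2m$, so the wild relation contains exactly $m$ commutator pairs $[X_{i+2j}, X_{i+2j+1}]$ for $0 \le j \le m-1$. We use the presentation of $\overline{G} \defeq G/P(G)^{p,3,s}$ induced by Proposition~\ref{proposition:gen_and_relation_G_k}. Its generators are the images of $\sigma,\tau,x_0,\dots,x_d$, and its relations are the tame relation (2), the wild relation (3)(a), and the relations expressing that the image of $P(G)$ has $p^s$-central class $\le 2$.

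We define automorphisms of $\overline{G}$ fixing $\sigma,\tau,x_0,x_1,\dots,x_{i-1}$, and hence acting trivially on $G/P(G)$. On the pairs we let:
\begin{itemize}
\item an element of $\mathfrak{S}_m$ permute the pairs $(x_{i+2j},x_{i+2j+1})$;
\item the $j$-th generator of $(\bZ/2\bZ)^m$ send $(x_{i+2j},x_{i+2j+1})\mapsto(x_{i+2j+1},x_{i+2j})$.
\end{itemize}
The swap turns $[x_a,x_b]$ into $[x_b,x_a]=[x_a,x_b]^{-1}$, which breaks the relation. We therefore modify it, for example to $(x_a,x_b)\mapsto(x_b,x_a^{-1})$ or to $(x_a,x_b)\mapsto(x_b^{-1},x_a^{-1})$. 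The aim is to choose a modification that preserves $[x_a,x_b]$ modulo $P^{p,3,s}$ and still has order $2$. Note that $[x_b^{-1},x_a^{-1}] \equiv [x_b,x_a]$ modulo $P^{p,3,s}$, which is not what we want. A better choice is $x_a\mapsto x_b$, $x_b\mapsto x_a^{-1}$, which gives $[x_b,x_a^{-1}] \equiv [x_a,x_b]$ modulo the third term. That map has order $4$, however. Its square is $x\mapsto x^{-1}$ on the pair, and this square must be killed. A cleaner option is $x_a\mapsto x_a^{-1}$ with $x_b$ fixed, which gives $[x_a^{-1},x_b] \equiv [x_a,x_b]^{-1}$, so this is also not it. We will settle the correct order-$2$ involution by computing in the free class-$2$ (mod $p^s$) quotient, where commutators are bilinear and alternating. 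We need an element of order $2$ in $GL_2$ of determinant $1$ that does not lie in the identity component. Since $-I$ is the only involution in $SL_2$, we take $x_a\mapsto x_a^{-1}$, $x_b\mapsto x_b^{-1}$, which fixes $[x_a,x_b]$ modulo $P^{p,3,s}$ because the commutator map is bilinear there. These sign changes on each pair, together with the permutations of the pairs, generate $(\bZ/2\bZ)^m\rtimes\mathfrak{S}_m$, since $\mathfrak{S}_m$ permutes the pairs and preserves $\prod_j [x_{i+2j},x_{i+2j+1}]$ modulo the third term, where commutators commute.

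To check that each assignment defines an endomorphism of $\overline{G}$ we verify the relations. The tame relation involves only fixed generators. The class-$2$ relations are preserved because the images lie in the image of $P$. For the wild relation, the left side $\sigma x_0\sigma^{-1}$ and the terms $x_0$, $\tau$, $x_1^{p^s}$, $\delta'$ are fixed, and the commutator product is preserved modulo $P^{p,3,s}$ as computed. The one point that needs care is that $[P,P]$-elements are central only modulo $P^{p,3,s}$ within $P$, while the whole of $\overline{G}$ involves $\sigma,\tau$. So we reduce to the bilinearity of $P/P^{2}\times P/P^{2}\to P^{2}/P^{3}$ (writing $P^{j}$ for $P^{p,j,s}$), which holds inside $P/P^{p,3,s}$. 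Each map is its own inverse or has an evident inverse on generators, so it is bijective, and the assignment is a homomorphism from the abstract group because it is defined by a signed permutation action on the generators.

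For faithfulness on $G/P(G)^{p,2,s}$, the induced action on $P/P^{p,2,s}$, and hence on $P/P^{p,2,1}$ (an $\mathbb{F}_p$-space), sends $\overline{x}_r$ to $\pm\overline{x}_{r'}$. By Lemma~\ref{lemma:linear_independent_F_p} the elements $\overline{x}_r$ with $r\ge 2$ are linearly independent, and $i\ge 2$. Since $p$ is odd, $-1\ne 1$, so a nontrivial signed permutation acts nontrivially. This proves both the injectivity of the group into $\Aut_{G/P(G)}(\overline{G})$ and the faithfulness. The main obstacle is the relation check in the nonabelian quotient, namely making the bilinearity modulo $P^{p,3,s}$ rigorous given that $P$ is only normally generated by the $x_r$.
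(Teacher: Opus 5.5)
Your final construction, which permutes the commutator pairs $(x_{i+2j},x_{i+2j+1})$ and inverts both members of a pair, is exactly the paper's proof (its $\varphi_{i,j}$ and $\varphi_{-i}$); like the paper, you check the wild relation via bilinearity of commutators in the class-$\le 2$ quotient $P(G)/P(G)^{p(G),3,s(G)}$ and get faithfulness from Lemma~\ref{lemma:linear_independent_F_p} with $p(G)$ odd. In a write-up, remove the abandoned swap and order-$4$ detours (your bullet defining the $(\bZ/2\bZ)^m$ generators as swaps is wrong as stated), and note that the ``obstacle'' you flag is not one: bilinearity of the commutator holds for arbitrary elements of any group of nilpotency class at most two, however $P(G)$ is generated.
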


\begin{proof}
    We shall write 
    \[
        I \coloneqq \{r\in \mathbf Z \mid i(G)\leq r\leq d(G)-1,\quad r\equiv d(G)-1 \pmod 2 \}.
    \]
    Throughout the remainder of the proof, $i$ and $j$ range over $I$.
    Moreover, let $\sigma, \tau, x_0, \dots, x_{d(G)}$ be topological generators of $G$ as in Proposition \ref{proposition:gen_and_relation_G_k}. 
    First, let $\varphi_{i,j}$ be the assignment on the topological generators of $G$ [cf. Proposition \ref{proposition:gen_and_relation_G_k}] defined by 
    \[
    x_i \mapsto x_j,\ x_{i+1} \mapsto x_{j+1},\ x_j \mapsto x_i,\ x_{j+1} \mapsto x_{i+1},
    \]
    and the identity on all the other generators of $G$. 
    Since $\varphi_{i,j}$ fixes $\sigma$ and $\tau$, $\varphi_{i,j}(\sigma)$ and $\varphi_{i,j}(\tau)$ satisfy the relation of Proposition \ref{proposition:gen_and_relation_G_k}, (2), modulo $P(G)^{p(G),3,s(G)}$. 
    Moreover, since the quotient $P(G)/P(G)^{p(G),3,s(G)}$ is nilpotent of class at most two, the images of the commutators $[x_i,x_{i+1}]$ in $P(G)^{p(G),2,s(G)}/P(G)^{p(G),3,s(G)}$ are central. 
    Thus, we have 
    \begin{align*}
    \delta(\varphi_{i,j}(\sigma), \varphi_{i,j}(\tau), \varphi_{i,j}(x_1), \dots, \varphi_{i,j}(x_{d(G)})) 
    = \delta(\sigma, \tau, \varphi_{i,j}(x_1), \dots, \varphi_{i,j}(x_{d(G)})) \\
    =\delta'(\sigma, \tau, x_1, \dots, x_{i(G)-1}) \left[\varphi_{i,j}(x_{i(G)}), \varphi_{i,j}(x_{i(G)+1})\right] \cdots \left[\varphi_{i,j}(x_{d(G)-1}), \varphi_{i,j}(x_{d(G)})\right] \\ 
    = \delta'(\sigma, \tau, x_1, \dots, x_{i(G)-1}) \left[x_{i(G)}, x_{i(G)+1}\right] \cdots \left[x_{d(G)-1}, x_{d(G)}\right] \mod P(G)^{p(G),3,s(G)}. 
    \end{align*}

    The elements
    $\varphi_{i,j}(x_0),\ldots,\varphi_{i,j}(x_{d(G)})$
    topologically normally generate the closed normal subgroup $P(G)$ of
    $G$. Moreover, $\varphi_{i,j}(\sigma)$ and $\varphi_{i,j}(\tau)$
    satisfy relation~(2) of Proposition~2.4, and
    \[
    \varphi_{i,j}(\sigma),\varphi_{i,j}(\tau),
    \varphi_{i,j}(x_0),\ldots,\varphi_{i,j}(x_{d(G)})
    \]
    satisfy relation~(3) of Proposition~2.4 modulo $P(G)^{p(G),3,s(G)}$.
    Therefore, the assignment $\varphi_{i,j}$ determines an endomorphism
    of $G/P(G)^{p(G),3,s(G)}$. 
    We shall denote this endomorphism again by $\varphi_{i,j}$. 
    Moreover, one verifies immediately that $\varphi_{i,j}^2=\id$. Thus, $\varphi_{i,j}$ is an automorphism of $G/P(G)^{p(G),3,s(G)}$. 
        
    Next, let $\varphi_{-i}$ be the assignment on topological generators of $G$ defined by 
    \[
    x_i \mapsto x_i^{-1},\ x_{i+1} \mapsto x_{i+1}^{-1},
    \]
    and the identity on all the other generators of $G$. 
    Since $\varphi_{-i}$ fixes $\sigma$ and $\tau$, $\varphi_{-i}(\sigma)$ and $\varphi_{-i}(\tau)$ satisfy the relation of Proposition \ref{proposition:gen_and_relation_G_k}, (2), modulo $P(G)^{p(G),3,s(G)}$. 
    Moreover, since [it is immediate that] $[x_i^{-1},x_{i+1}^{-1}] = [x_i,x_{i+1}]$ modulo $P(G)^{p(G),3,s(G)}$, we have 
    \begin{align*}
    \delta(\varphi_{-i}(\sigma), \varphi_{-i}(\tau), \varphi_{-i}(x_1), \dots, \varphi_{-i}(x_{d(G)})) 
    = \delta(\sigma, \tau, \varphi_{-i}(x_1), \dots, \varphi_{-i}(x_{d(G)})) \\
    =\delta'(\sigma, \tau, x_1, \dots, x_{i(G)-1}) \left[\varphi_{-i}(x_{i(G)}), \varphi_{-i}(x_{i(G)+1})\right] \cdots \left[\varphi_{-i}(x_{d(G)-1}), \varphi_{-i}(x_{d(G)})\right] \\ 
    = \delta'(\sigma, \tau, x_1, \dots, x_{i(G)-1}) \left[x_{i(G)}, x_{i(G)+1}\right] \cdots \left[x_{d(G)-1}, x_{d(G)}\right] \mod P(G)^{p(G),3,s(G)}. 
    \end{align*}

    The elements
    $\varphi_{-i}(x_0),\ldots,\varphi_{-i}(x_{d(G)})$
    topologically normally generate the closed normal subgroup $P(G)$ of
    $G$. Moreover, $\varphi_{-i}(\sigma)$ and $\varphi_{-i}(\tau)$
    satisfy relation~(2) of Proposition~2.4, and
    \[
    \varphi_{-i}(\sigma),\varphi_{-i}(\tau),
    \varphi_{-i}(x_0),\ldots,\varphi_{-i}(x_{d(G)})
    \]
    satisfy relation~(3) of Proposition~2.4 modulo $P(G)^{p(G),3,s(G)}$.
    Therefore, the assignment $\varphi_{-i}$ determines an endomorphism
    of $G/P(G)^{p(G),3,s(G)}$.
    We shall denote this endomorphism again by $\varphi_{-i}$. 
    Moreover, one verifies immediately that $\varphi_{-i}^2=\id$. Thus, $\varphi_{-i}$ is an automorphism of $G/P(G)^{p(G),3,s(G)}$. 
    Moreover, since each of $\varphi_{i,j}$ and $\varphi_{-i}$
    fixes $\sigma$ and $\tau$ and maps each $x_r$ into $P(G)$,
    each of these automorphisms induces the identity automorphism
    of $G/P(G)$. Hence
    \[
    \varphi_{i,j},\ \varphi_{-i}
    \in
    \operatorname{Aut}_{G/P(G)}
    \left(
    G/P(G)_{p(G),3,s(G)}
    \right).
    \]

    Let $\Gamma$ denote the subgroup of $\Aut_{G/P(G)}\left(G/P(G)^{p(G),3,s(G)}\right)$ generated by the automorphisms $\varphi_{i,j}$ and $\varphi_{-i}$. 
    We first prove that $\Gamma$ acts faithfully on $G/P(G)^{p(G),2,s(G)}$. 
    Since [it is immediate that] each of $\varphi_{i,j}$ and $\varphi_{-i}$ acts trivially on $G/P(G)$, $\Gamma$ acts trivially on $G/P(G)$. 
    On the other hand, since $P(G)^{p(G),2,s(G)}\subseteq P(G)^{p(G),2,1}$, there is a natural
    surjection $G/P(G)^{p(G),2,s(G)} \twoheadrightarrow G/P(G)^{p(G),2,1}$. 
    Thus, to prove that $\Gamma$ acts faithfully on $G/P(G)^{p(G),2,s(G)}$, it suffices to verify that $\Gamma$ acts faithfully on $G/P(G)^{p(G),2,1}$. 
    For each integer $r$ such that $i(G) \leq r \leq d(G)$, write $\overline{x}_r$ for the image of $x_r$ in $P(G)/P(G)^{p(G),2,1}$. 
    Then it follows from Lemma \ref{lemma:linear_independent_F_p} 
    that the elements
    $\overline{x}_r$, where $r \in \{s \in \bZ \mid i(G) \leq s \leq d(G)\}$,
    are $\mathbb{F}_{p(G)}$-linearly independent. 
    Let $\varphi$ be a nontrivial element of the subgroup generated by $\varphi_{i,j}$ and $\varphi_{-i}$. 
    Then it is immediate that there exist a permutation
    $\sigma_{\rm perm}$ of
    $\{s \in \bZ \mid i(G) \leq s \leq d(G)\}$
    and a map $\beta : \{s \in \bZ \mid i(G) \leq s \leq d(G)\} \to \{\pm 1\}$ such that 
    \[
    \varphi(\overline{x}_r)
    =
    \overline{x}_{\sigma_{\rm perm}(r)}^{\beta(r)}
    \]
    for each $r \in \{s \in \bZ \mid i(G) \leq s \leq d(G)\}$. 
    Since $\varphi$ is nontrivial and $p(G)$ is odd, it is immediate that either $\sigma_{\rm perm} \neq \mathrm{id}$ or $\beta(r)=-1$ for some $r$.
    Thus, since $p(G)$ is odd and 
    the elements $\overline{x}_r$, where $r \in \{s \in \bZ \mid i(G) \leq s \leq d(G)\}$,
    are $\mathbb{F}_{p(G)}$-linearly independent in $P(G)/P(G)^{p(G),2,1}$, it is immediate that $\varphi$ acts nontrivially on $P(G)/P(G)^{p(G),2,1}$. Therefore, $\Gamma$ acts faithfully on $G/P(G)^{p(G),2,1}$, hence also on $G/P(G)^{p(G),2,s(G)}$.

    Write $m \defeq \left\lfloor \frac{d(G)-1}{2} \right\rfloor$ and, for $a \in \left\{1, \dots, m \right\}$, write $n_a \defeq (i(G)+2(a-1), i(G)+2a-1)$. 
    Thus, since $\Gamma$ acts faithfully on $G/P(G)^{p(G),2,s(G)}$, it is immediate that $\Gamma$ may be regarded as acting on the set $\{\pm 1, \dots, \pm m\}$ by permutations and sign changes relative to the pairs $n_1, \dots, n_m$. 
    Therefore, it holds that $\Gamma$ is isomorphic to the hyperoctahedral group, i.e., that
    \[
    \Gamma \simeq 
    (\mathbb{Z}/2\mathbb{Z})^{\left\lfloor \frac{d(G)-1}{2} \right\rfloor} \rtimes \mathfrak{S}_{\left\lfloor \frac{d(G)-1}{2} \right\rfloor}.
    \]
    This completes the proof of Theorem \ref{theorem:nontrivial_sub_Aut(G_3)}. 
\end{proof}

\begin{theorem}\label{theorem:existence_of_torsion_Aut(G)}
    Suppose that $d(G) \geq 3$ and that $p(G)$ is odd. 
    Then, for every subgroup $H$ of $\left(\bZ/2\bZ \right)^{\left\lfloor \frac{d(G)-1}{2}\right\rfloor} \rtimes \mathfrak{S}_{\left\lfloor \frac{d(G)-1}{2}\right\rfloor}$ whose order is prime to $p(G)$, there exists a subgroup of $\Aut(G)$ that is isomorphic to $H$. 
\end{theorem}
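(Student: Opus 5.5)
The plan is to combine Theorem~\ref{theorem:nontrivial_sub_Aut(G_3)}, Proposition~\ref{proposition:lift_of_Aut(G/P^3)_add}, and Corollary~\ref{corollary:equiv_of_existence_of_-l-torsion}. Write $p \defeq p(G)$, $s \defeq s(G)$, $P \defeq P(G)$ and $m \defeq \lfloor \frac{d(G)-1}{2} \rfloor$. Let $\Gamma \subseteq \Aut_{G/P}(G/P^{p,3,s})$ be the subgroup constructed in Theorem~\ref{theorem:nontrivial_sub_Aut(G_3)}, so that $\Gamma \simeq (\bZ/2\bZ)^m \rtimes \mathfrak{S}_m$. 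Fix a subgroup $H \subseteq \Gamma$ of order $m_H$ prime to $p$.

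First, by the second assertion of Theorem~\ref{theorem:nontrivial_sub_Aut(G_3)}, $\Gamma$ acts faithfully on $G/P^{p,2,s}$. Hence the image $\overline{H}$ of $H$ under
\[
\Aut_{G/P}(G/P^{p,3,s}) \to \Aut_{G/P}(G/P^{p,2,s})
\]
is isomorphic to $H$, and $\overline{H}$ lies in the image appearing in Proposition~\ref{proposition:lift_of_Aut(G/P^3)_add}. By that proposition, every element of $\overline{H}$ lifts to an element of $\Aut_{G/P}(G)$. Let $D \subseteq \Aut_{G/P}(G)$ be the full inverse image of $\overline{H}$ under the natural map $\Aut_{G/P}(G) \to \Aut_{G/P}(G/P^{p,2,s})$. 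This map is surjective onto $\overline{H}$, so its restriction $D \to \overline{H}$ is surjective, with kernel $\Aut_{G/P^{p,2,s}}(G)$.

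Next, apply Theorem~\ref{theorem:pro-pness_of_kernel} with $J = G$ (topologically finitely generated by Proposition~\ref{proposition:gen_and_relation_G_k}) and with $P$ pro-$p$ (Lemma~\ref{lemma:torsionfreeness_of_G}(i)), taking $i = 2$. This shows that the kernel $\Aut_{G/P^{p,2,s}}(G)$ is pro-$p$; it is closed by Lemma~\ref{lemma:profinite_structrure_of_Ker}. So
\[
1 \to \Aut_{G/P^{p,2,s}}(G) \to D \to \overline{H} \to 1
\]
is an extension of the finite group $\overline{H}$, of order prime to $p$, by a pro-$p$ group. The profinite Schur--Zassenhaus theorem (Ribes--Zalesskii, Theorem 2.3.15) then gives a splitting, hence a subgroup of $D \subseteq \Aut(G)$ isomorphic to $\overline{H} \simeq H$. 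This is exactly the argument of (ii)$\Rightarrow$(i) in Corollary~\ref{corollary:equiv_of_existence_of_-l-torsion}, with $i = 2$, so one may simply cite that corollary.

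The only delicate point is the lifting step. Corollary~\ref{corollary:equiv_of_existence_of_-l-torsion} requires a subgroup of $\Aut^P(G)$, not of $\Aut(G/P^{p,3,s})$. We therefore have to pass through level $2$, where Proposition~\ref{proposition:lift_of_Aut(G/P^3)_add} guarantees surjectivity, instead of working at level $3$, where no lifting statement is available. The faithfulness assertion of Theorem~\ref{theorem:nontrivial_sub_Aut(G_3)} is what prevents $H$ from collapsing when we pass from level $3$ down to level $2$. One should also check that $D$ is a closed subgroup, so that profinite Schur--Zassenhaus applies. This holds because $D$ is a finite union of cosets of the closed subgroup $\Aut_{G/P^{p,2,s}}(G)$.
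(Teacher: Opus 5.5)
Your proposal is correct and takes essentially the same route as the paper's proof. You realize $H$ inside the level-$3$ subgroup of Theorem~\ref{theorem:nontrivial_sub_Aut(G_3)}, push it injectively to level $2$ by faithfulness, and lift it via Proposition~\ref{proposition:lift_of_Aut(G/P^3)_add} to an extension of $H$ by the pro-$p(G)$ kernel $\Aut_{G/P(G)^{p(G),2,s(G)}}(G)$ of Theorem~\ref{theorem:pro-pness_of_kernel}, which you split by profinite Schur--Zassenhaus; your explicit check that the preimage $D$ is closed is a detail the paper leaves implicit.
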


\begin{proof}
    Let $H$ be a subgroup of $\left(\mathbb{Z}/2\mathbb{Z}\right)^{\left\lfloor \frac{d(G)-1}{2}\right\rfloor} \rtimes \mathfrak{S}_{\left\lfloor \frac{d(G)-1}{2}\right\rfloor}$ whose order is prime to $p(G)$.    
    Then it follows from Theorem \ref{theorem:nontrivial_sub_Aut(G_3)} that there exists a subgroup $\Gamma_3 \subseteq \Aut_{G/P(G)}\bigl(G/P(G)^{p(G),3,s(G)}\bigr)$ 
    such that 
    $\Gamma_3$ is isomorphic to $H$, 
    and every nontrivial element of $\Gamma_3$ acts nontrivially on $G/P(G)^{p(G),2,s(G)}$. 
    Let 
    $$\lambda:\Aut_{G/P(G)}\bigl(G/P(G)^{p(G),3,s(G)}\bigr)\to \Aut_{G/P(G)}\bigl(G/P(G)^{p(G),2,s(G)}\bigr)$$ 
    be the natural homomorphism. 
    In particular, 
    the restriction of
    $\lambda$ to $\Gamma_3$ is injective.
    Thus, $H_2 \defeq \lambda(\Gamma_3) \subseteq \Aut_{G/P(G)}(G/P(G)^{p(G),2,s(G)})$ is isomorphic to $H$. 
    
    Next, write
    $\rho:
    \Aut_{G/P(G)}(G)
    \to
    \Aut_{G/P(G)}\bigl(G/P(G)^{p(G),2,s(G)}\bigr)$
    for the natural homomorphism. 
    Then it follows from Proposition \ref{proposition:lift_of_Aut(G/P^3)_add} that 
    $\operatorname{Im}(\rho)=\operatorname{Im}(\lambda)$. 
    Thus, since $H_2 \subseteq \operatorname{Im}(\lambda)$, it follows that $H_2 \subseteq \operatorname{Im}(\rho)$.    
    Let $\Pi \defeq \rho^{-1}(H_2)$. Then we obtain an exact sequence
    \[
    1
    \longrightarrow
    \Aut_{G/P(G)^{p(G),2,s(G)}}(G)
    \longrightarrow
    \Pi
    \longrightarrow
    H_2
    \longrightarrow
    1.
    \]
    Since $H_2 \cong H$, the order of $H_2$ is prime to $p(G)$. 
    Thus, it follows from Theorem \ref{theorem:pro-pness_of_kernel}, 
    where we take the ``$(J, P, i, s)$’’ of Theorem \ref{theorem:pro-pness_of_kernel},  to be $(G, P(G), 2, s(G))$, that $\Aut_{G/P(G)^{p(G),2,s(G)}}(G)$ is a pro-$p(G)$ group. 
    Thus, it follows from \cite{ribes-profinite_Groups}, Theorem 2.3.15, 
    where we take the ``$(G, K, H)$’’ of \cite{ribes-profinite_Groups}, Theorem 2.3.15,  to be $(\Pi, \Aut_{G/P(G)^{p(G),2,s(G)}}(G), H_2)$, that this exact sequence splits.
    This completes the proof of Theorem \ref{theorem:existence_of_torsion_Aut(G)}.
\end{proof}

\section{Non-quasi-geometric subgroups of the outer automorphism groups of groups of MLF-type}
\label{section3}

In the present \S \ref{section3}, we prove that 
there exist non-quasi-geometric finite subgroups [cf. \ \cite{Hoshi-topics_MLF}, Definition 6.5, (ii)] of the outer automorphism groups of the absolute Galois groups of certain mixed-characteristic local fields [cf.\ Corollary~\ref{corollary:non-quasi_geometric_k_1} below].

In the present \S \ref{section3}, we maintain the notational conventions
introduced at the beginning of the preceding \S \ref{section2add}.

\begin{definition}
    We shall say that a finite subgroup of $\Out(G)$ is {\it quasi-geometric} [cf. \ \cite{Hoshi-topics_MLF}, Definition 6.5, (ii)] if its inverse image in $\Aut(G)$ is a group of MLF-type. 
\end{definition}

\begin{definition}
    We shall write
         \[
             {\rm Orb}_{\rm qg}(G)
         \]
    for the set of quasi-geometric subgroups of Out(G).
\end{definition}

\begin{theorem}\label{theorem:non-quasi-geometric_Out(G_k)}
    Let $\Gamma$ be a nontrivial finite subgroup of $\Aut(G)$ and $H$ an open subgroup of $G$ that is preserved by $\Gamma$.
    Then there exists a non-quasi-geometric finite subgroup of $\Out(H)$. 
\end{theorem}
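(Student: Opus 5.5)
The plan is to take the image of $\Gamma$ in $\Out(H)$ and to show that its inverse image in $\Aut(H)$ contains a nontrivial torsion element. By Lemma~\ref{lemma:torsionfreeness_of_G}, (iv), no group of MLF-type has such an element, so this image cannot be quasi-geometric. Note first that $H$, being an open subgroup of the group $G$ of MLF-type, is itself a group of MLF-type: it is the absolute Galois group of a finite extension of the base field. Since $\Gamma$ preserves $H$, restriction gives a homomorphism
\[
r \colon \Gamma \longrightarrow \Aut(H).
\]
Let $\Delta \subseteq \Out(H)$ be the image of $\Gamma$ under $\Gamma \to \Aut(H) \twoheadrightarrow \Out(H)$, and let $\widetilde{\Delta} \subseteq \Aut(H)$ be its inverse image. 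Then $\Delta$ is finite, since it is a quotient of the finite group $\Gamma$. Also $\widetilde{\Delta} \supseteq r(\Gamma)$.

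The main step, and the one I expect to need the most care, is showing that $r$ is injective. Let $\gamma \in \Gamma$ restrict to the identity on $H$. I would choose an open normal subgroup $N \subseteq H$ of $G$; then $\gamma$ is also trivial on $N$. For $g \in G$ and $n \in N$ we have $gng^{-1} \in N$, hence
\[
\gamma(g)\, n\, \gamma(g)^{-1} = \gamma(gng^{-1}) = gng^{-1}.
\]
Thus $g^{-1}\gamma(g)$ centralizes the open subgroup $N$. I would then invoke the standard slimness property of groups of MLF-type from \cite{Hoshi-intro_mono_MLF}: the centralizer in $G$ of any open subgroup is trivial. This gives $\gamma(g) = g$ for all $g$, so $\gamma = \id$. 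This is the one point where something beyond the results stated so far in the paper is needed, and I would cite it from \cite{Hoshi-intro_mono_MLF}.

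With $r$ injective, $r(\Gamma) \subseteq \widetilde{\Delta}$ is a nontrivial finite subgroup, so $\widetilde{\Delta}$ contains a nontrivial element of finite order. On the other hand, Lemma~\ref{lemma:torsionfreeness_of_G}, (iv), applied to any group of MLF-type, shows that such groups are torsion-free. Hence $\widetilde{\Delta}$ is not of MLF-type, i.e., $\Delta$ is a non-quasi-geometric finite subgroup of $\Out(H)$. The conclusion depends only on the abstract group $\widetilde{\Delta}$, so no discussion of its profinite topology is needed.
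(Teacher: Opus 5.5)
Your proposal is correct and follows essentially the same route as the paper's proof: you restrict $\Gamma$ to $H$, use slimness of $G$ to see that the restricted group is still nontrivial, and conclude via torsion-freeness of groups of MLF-type (Lemma~\ref{lemma:torsionfreeness_of_G}, (iv)). The only difference is that the paper cites Minamide's Lemma~1.6 for the fact that an automorphism of a slim group that is trivial on an open subgroup is trivial, whereas you prove this directly using an open normal subgroup $N \subseteq H$ and the centralizer computation.
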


\begin{proof}
    First, let us observe that $G$ is slim [cf. \cite{Hoshi-intro_mono_MLF}, Lemma 1.8].
    Thus, it follows from \cite{Minamide_indecomposability}, Lemma 1.6, that if the restriction of $\Gamma$ to $H$ is trivial, then $\Gamma$ acts trivially on $G$. 
    This contradicts the nontriviality of $\Gamma$. 
    We shall write $\Gamma_H$ for the [necessarily nontrivial finite] subgroup of $\Aut(H)$ determined by the natural action of $\Gamma$ on $H$ and $\overline{\Gamma}_H$ for the image of $\Gamma_H$ in $\Out(H)$. 
    On the other hand, it follows immediately from Lemma \ref{lemma:torsionfreeness_of_G}, (iv), that every group of MLF-type is torsion-free. 
    Thus, since the inverse image of $\overline{\Gamma}_H$ in $\Aut(H)$ contains $\Gamma_H$, this inverse image is never a profinite group of MLF-type. 
    Hence, $\overline{\Gamma}_H$ is not a quasi-geometric subgroup of $\Out(H)$. 
\end{proof}

\begin{corollary}
\label{corollary:non-quasi_geometric_k_1}
    Suppose that $p_k$ is odd and that $d_k \geq 3$. 
    Then there exists a non-quasi-geometric finite subgroup of $\Out(G_k)$.
\end{corollary}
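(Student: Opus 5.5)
The plan is to combine Theorem \ref{theorem:existence_of_torsion_Aut(G)} with Theorem \ref{theorem:non-quasi-geometric_Out(G_k)}, taking $G \defeq G_k$ and $H \defeq G_k$ itself. Since $G_k$ is a group of MLF-type, the reconstruction results recalled at the beginning of \S\ref{section2add} give $p(G_k) = p_k$ and $d(G_k) = d_k$. By hypothesis, then, $p(G_k)$ is odd and $d(G_k) \geq 3$.

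First I produce a nontrivial finite subgroup of $\Aut(G_k)$. Since $d_k \geq 3$, we have $m \defeq \lfloor \frac{d_k-1}{2} \rfloor \geq 1$, so the hyperoctahedral group $(\bZ/2\bZ)^{m} \rtimes \mathfrak{S}_{m}$ contains a subgroup of order $2$, for instance the sign change on the first coordinate. Because $p_k$ is odd, the order $2$ is prime to $p(G_k)$. Theorem \ref{theorem:existence_of_torsion_Aut(G)} therefore yields a subgroup $\Gamma \subseteq \Aut(G_k)$ with $\Gamma \simeq \bZ/2\bZ$. In particular, $\Gamma$ is a nontrivial finite subgroup of $\Aut(G_k)$.

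Next I apply Theorem \ref{theorem:non-quasi-geometric_Out(G_k)} to $\Gamma$ with $H = G_k$. This is allowed because $G_k$ is an open subgroup of itself and is trivially preserved by $\Gamma$. The theorem then gives a non-quasi-geometric finite subgroup of $\Out(G_k)$, namely the image $\overline{\Gamma}$ of $\Gamma$. It is worth noting why this works. The inverse image of $\overline{\Gamma}$ in $\Aut(G_k)$ contains the nontrivial finite group $\Gamma$, so it has torsion. By Lemma \ref{lemma:torsionfreeness_of_G}, (iv), groups of MLF-type are torsion-free, so this inverse image cannot be of MLF-type.

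I do not expect a real obstacle here; the substantive work lies in the earlier results. The only points to check are the following:
\begin{itemize}
\item the identifications $p(G_k)=p_k$ and $d(G_k)=d_k$, which the cited reconstruction results provide;
\item that $m \geq 1$, which holds since $d_k \geq 3$;
\item that an order-$2$ subgroup is coprime to the odd prime $p_k$.
\end{itemize}
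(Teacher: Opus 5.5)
Your proposal is correct and follows the paper's own proof. The paper likewise takes a nontrivial subgroup of order prime to $p_k$ in the hyperoctahedral group, realizes it inside $\Aut(G_k)$ via Theorem \ref{theorem:existence_of_torsion_Aut(G)} (using $p(G_k)=p_k$ and $d(G_k)=d_k$), and then applies Theorem \ref{theorem:non-quasi-geometric_Out(G_k)} with $G=H=G_k$; your explicit order-$2$ sign change is simply a concrete choice of that subgroup.
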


\begin{proof}
    Since $d_k \geq 3$, the group
    \[
    (\mathbb Z/2\mathbb Z)^{\lfloor (d_k-1)/2\rfloor}
    \rtimes \mathfrak{S}_{\lfloor (d_k-1)/2\rfloor}
    \]
    contains a nontrivial subgroup of order prime to $p_k$. 
    Thus, this assertion follows immediately from Theorem \ref{theorem:non-quasi-geometric_Out(G_k)} in the case where $G=H=G_k$, together with 
    Theorem \ref{theorem:existence_of_torsion_Aut(G)}, the definition of a group of MLF-type [cf. \cite{Hoshi-intro_mono_MLF}, Definition 3.1], and \cite{Hoshi-intro_mono_MLF}, Proposition 3.6. 
\end{proof}

\begin{definition}
    Let us recall that the natural homomorphism
    \[
    \operatorname{Aut}(k) \hookrightarrow \operatorname{Out}(G_{k})
    \]
    from the automorphism group of the mixed-characteristic local field $k$ to the outer automorphism group of the absolute Galois group $G_{k}$ of $k$ is injective [cf. \cite{Hoshi-intro_mono_MLF}, Proposition 2.1]. 
    We shall write 
    \[
        {\rm Orb}^{\rm all}(\Aut(k))
    \]
    for the set of $\Out(G_k)$-conjugates of subgroups of $\Aut(k)$.  
\end{definition}

\begin{corollary}
\label{corollary:non-quasi_geometric_k_3}
    Suppose that $p_k$ is odd and that $d_k \geq 3$. 
    Then there exists a finite subgroup of $\Out(G_k)$ that is not 
    in ${\rm Orb}^{\rm all}(\Aut(k))$. 
\end{corollary}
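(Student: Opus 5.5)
Our plan is to show that the non-quasi-geometric subgroup $\overline{\Gamma}_H$ produced in Corollary~\ref{corollary:non-quasi_geometric_k_1} cannot lie in ${\rm Orb}^{\rm all}(\Aut(k))$. We argue by contradiction. Suppose every finite subgroup of $\Out(G_k)$ lies in ${\rm Orb}^{\rm all}(\Aut(k))$. Then, in particular, the finite subgroup $\overline{\Delta} \subseteq \Out(G_k)$ from the proof of Corollary~\ref{corollary:non-quasi_geometric_k_1} would be $\Out(G_k)$-conjugate to some subgroup $D \subseteq \Aut(k)$. Here $\overline{\Delta}$ is the image of a nontrivial subgroup $\Delta \subseteq \Aut(G_k)$ of order prime to $p_k$, which exists by Theorem~\ref{theorem:existence_of_torsion_Aut(G)}.

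Quasi-geometricity is stable under $\Out(G_k)$-conjugation, since conjugating a subgroup of $\Out(G_k)$ conjugates its inverse image in $\Aut(G_k)$ by an element of $\Aut(G_k)$, which does not change the isomorphism class. So it suffices to show that every subgroup $D \subseteq \Aut(k)$ is quasi-geometric, because $\overline{\Delta}$ is not (Theorem~\ref{theorem:non-quasi-geometric_Out(G_k)}).

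To show that $D$ is quasi-geometric, put $L \defeq k^{D}$, so that $k/L$ is a finite Galois extension with group $D$. Every element of $G_L$ restricts to an automorphism of $k$ lying in $D$. Conjugation therefore gives a homomorphism $G_L \to \Aut(G_k)$ lying over $G_L \twoheadrightarrow \Gal(k/L) = D \hookrightarrow \Out(G_k)$; this uses the compatibility of the injection $\Aut(k) \hookrightarrow \Out(G_k)$ with conjugation by lifts to $\overline{k}$. Its image is exactly the inverse image $\widetilde{D}$ of $D$: the restriction to $G_k$ is inner conjugation, which surjects onto $\mathrm{Inn}(G_k) \simeq G_k$, and the quotient maps onto $D$. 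It is injective because $G_L$ is slim (\cite{Hoshi-intro_mono_MLF}, Lemma 1.8), so an element centralizing the open subgroup $G_k$ is trivial (\cite{Minamide_indecomposability}, Lemma 1.6). Hence $\widetilde{D} \simeq G_L$ is of MLF-type, and $D$ is quasi-geometric. This is essentially \cite{Hoshi-topics_MLF}, Proposition 6.7, applied to subgroups of $\Aut(k)$, and we could simply cite it.

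The main obstacle is making sure the identification $\widetilde{D} \simeq G_L$ is correct for an arbitrary subgroup $D \subseteq \Aut(k)$ and not only for $\Aut(k)$ itself. In particular, one needs that the map $G_L \to \Aut(G_k)$ really lands inside the inverse image of $D$ and is onto it. Both follow from the exact sequence $1 \to G_k \to G_L \to D \to 1$ together with the injectivity of $\Aut(k) \to \Out(G_k)$.

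A shortcut avoiding this would be to note directly that finite subgroups of $\Aut(k)$ have torsion-free inverse images. But that is exactly the content of the identification above, so we would keep the argument as outlined. The contradiction then shows that $\overline{\Delta} \notin {\rm Orb}^{\rm all}(\Aut(k))$.
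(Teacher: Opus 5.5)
Your proof is correct and follows the paper's route: it shows that every member of ${\rm Orb}^{\rm all}(\Aut(k))$ is quasi-geometric, using conjugation-invariance together with quasi-geometricity of subgroups of $\Aut(k)$, and then applies the non-quasi-geometric subgroup from Corollary~\ref{corollary:non-quasi_geometric_k_1}. Where the paper cites \cite{Hoshi-topics_MLF}, Proposition 6.7, and Lemma 6.6, (i), you instead check directly that the inverse image of any $D \subseteq \Aut(k)$ is isomorphic to $G_{k^D}$ (the same argument the introduction gives for number fields); your proof-by-contradiction framing is unnecessary but harmless.
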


\begin{proof}
    It follows from \cite{Hoshi-topics_MLF}, Proposition 6.7, that $\Aut(k) \subseteq \Out(G_k)$ is quasi-geometric. 
    Moreover, every subgroup of a quasi-geometric subgroup is quasi-geometric [cf. \cite{Hoshi-topics_MLF}, Lemma 6.6, (i)]. Since any $\Out(G_k)$-conjugate of a quasi-geometric subgroup is quasi-geometric by definition, every member of ${\rm Orb}^{\rm all}(\Aut(k))$ is a quasi-geometric subgroup of $\Out(G_k)$. 
    Thus, since ${\rm Orb}^{\rm all}(\Aut(k)) \subseteq {\rm Orb}_{\rm qg}(G_k)$, Corollary \ref{corollary:non-quasi_geometric_k_3} follows immediately from Corollary \ref{corollary:non-quasi_geometric_k_1}.  
\end{proof}

\begin{acknowledgements}
The author would like to express deep gratitude to Yuichiro Hoshi for many discussions related to this paper, and would also like to thank Shun Ishii for valuable comments and discussions. 
Finally, the author thanks K. Nishio for constant support and warm encouragement.
\end{acknowledgements}

\printbibliography

\end{document}